\def\Z{\mathbb Z}
\def\R{\mathbb R}
\def\N{\mathbb N}
\def\cal{\mathcal}
\def\F{{\cal F}}
\def\e{\varepsilon}
\def\a{\alpha}
\def\e{\varepsilon}
\lbrace\begin{array}{@{}l@{}}}%
\def\d{\, \mathrm{d}}
\def\ca{\mathbbmss{1}}
\def\00{{\bf 0}}
\DeclareMathOperator*{\spt}{spt}
\DeclareMathOperator*{\diam}{diam}
\newtheorem{theorem}{Theorem}[section]
\newtheorem{proposition}[theorem]{Proposition}
\newtheorem{lemma}[theorem]{Lemma}
\theoremstyle{definition}
\newtheorem{remark}[theorem]{Remark}
\newtheorem{definition}[theorem]{Definition}
\numberwithin{equation}{section}
\numberwithin{figure}{section}
\newcommand{\rc}[1]{{\color{black}{#1}}}
\title{A Compactness Theorem for functions on Poisson point clouds}
\author{Marco Caroccia}
\address{Dipartimento di Matematica, Politecnico di Milano}
\email{caroccia.marco@gmail.com}
\begin{document}
\maketitle
\begin{abstract}
    In this work we show a compactness Theorem for discrete functions on Poisson point clouds. We consider sequences with equibounded non-local $p$-Dirichlet energy: the novelty consists in the intermediate-interaction regime at which the non-local energy is computed.  
\end{abstract}
% \tableofcontents
 \section{Introduction}
In this paper we prove a compactness property for sequences of discrete functions defined on Poisson point clouds. We show (Theorem \ref{MainThm:CMP}) that a sequence with uniformly bounded $p$-Dirichlet non-local energy admits a converging subsequence, in a suitable topology (see Definition \ref{def:conv}). Non local energies on discrete systems have been intensively treated, under the variational point of view, in the last two decades, as a way of modelling several phenomena: from fracture mechanics to image denoising and crystallization. Typical discrete variational energies have their natural definition on regular or stochastic lattices (see for instance \cite{alicandro2004general}, \cite{alicandro2011integral}, \cite{alicandro2000finite},  \cite{braides2014discrete}, \cite{braides2002limits}, \cite{braides2018design}, \cite{braides2006discrete}, \cite{Braides2021}, \cite{chambolle1999finite}, \cite{cicalese2020n}, \cite{Friedrich_2020}, \cite{alicandro2014variational}, \cite{alicandro2020variational} and references therein). In some situations we need to account in the model for a randomness component. This leads towards a natural extension of classical models on lattices to a more general \textit{point cloud} framework. Among other interesting features, variational energies on point clouds have been proposed also as a tool to implement supervised or semi-supervised learning algorithms, clustering problem, data denoising and image denoising (as a partial list of literature on the topic we refer the reader to \cite{calder2020rates}, \cite{caroccia2020mumford}, \cite{garciatrillos15aAAA}, \cite{Ruf17a}, \cite{slepcev2019analysis} and references therein). \\
 
\rc{ We here consider point clouds as realizations of a  random variable $\eta$ called Poisson point process (we refer to Subsection \ref{sbsct:PoissonPP} for a detailed definitions). This formalism allows to account for point clouds that are Poisson distributed, in the sense that 
 \[
 \mathbb{P}( \#(points \ in\  A) =m)= \mathrm{Po}(\gamma|A|,m)=e^{-\gamma|A|}\frac{(\gamma|A|)^m}{m!}
 \] 
where $\mathrm{Po}(t,m)$ is the \textit{probability mass function of the Poisson distribution of parameter $t$} (we refer to Definition \ref{sbsct:PoissonPP} for a more accurate Definition of a Poisson point process). The parameter $\gamma$ is called the \textit{intensity of the process}. By introducing a parameter $\e$ we act on the \textit{intensity} of the process, obtaining a sequence of processes $\eta_{\e}$ whose realizations converge (almost surely) to a continuum set  as $\e$ goes to $0$} (in the sense that the normalized discrete measure induced by the process $\eta_{\e}$, weak*-converges to the Lebesgue measure). For any realization $\omega$ of the process, and for $\e>0$, we have a point cloud $\eta_{\e}(\omega)$. For $s_{\e}>0$ the energy at scale $\e$ is then defined for $u:\eta_{\e}(\omega) \rightarrow \R$ on $A\subseteq \R^d$ as:
\begin{equation}\label{eqn;energy}
 \F_{\e}^{\omega}(u;A):=\e^{d}\sum_{x\in \eta_{\e}(\omega)\cap A} \frac{\e^d}{ s_{\e}^d } \sum_{y\in \eta_{\e}(\omega)\cap B_{s_\e}(x)}\left(\frac{|u(y)-u(x)|}{s_{\e}}\right)^p.
 \end{equation}
Notice that the energy can be rephrased as
\[
\F_{\e}^{\omega}(u;A):=\frac{\e^{2d}}{s_{\e}^d}\sum_{\substack{x,y\in \eta_{\e}(\omega):\\ |x-y|\leq s_{\e}}} \left(\frac{|u(y)-u(x)|}{s_{\e}}\right)^p
\]
which express the fact that we are summing up the finite differences of $u$, up to a distance of $s_\e$: the scale of averaging. The scale $\{s_{\e}\}_{\e>0}$ is taken as a sequence $s_{\e}\rightarrow 0$ that, here and in the sequel, will satisfy the decay property:
    \begin{equation}\label{eqn:regime}
    \liminf_{\e\rightarrow 0} \frac{s_\e}{\e (\log(\e^{-d}))^{1/d}}=\beta>0
    \end{equation}
with $\beta<+\infty$. \\

The novelty of this work relies exactly in the assumption on the decay of $s_{\e}$ in terms of $\e$, expressed by \eqref{eqn:regime} with $0<\beta<+\infty$. The typical approach for energies defined on point clouds (as for instance in  \cite{caroccia2020mumford}, \cite{cristoferi18}, \cite{thorpe2019asymptotic}, \cite{trillos2016continuum}) makes use of the hypothesis $\beta=+\infty$ in order to study the asymptotic behavior as $\e\rightarrow 0$. In the long-range regime $\beta=+\infty$, the topology of convergence is the strong one induced by the $TL^1$ distance (see for instance \cite{garciatrillos15} or \cite{trillos2016continuum}). In contrast to the case $\beta=+\infty$ we can consider the case $\beta=0$. In the recent works \cite{carbra2021}, \cite{BraPia20} the authors deal with the short-range regime, where $s_\e\approx M\e$ or where nearest neighbors interactions are the only interactions involved. In the short-range regime it is still possible, at least in the planar case, to deduce the compactness of equibounded sequences (and thus the asymptotic behavior of $\F_{\e}$) but in a much weaker topology than the $TL^1$. In particular the analysis that seems to be missing in literature, and that we aim to fill for what concerns a compactness property, is when $0<\beta<+\infty$. As we state and prove in Theorem \ref{MainThm:CMP}, when $0<\beta<+\infty$, sequences with equibounded energy will be compact in the topology yielding the convergence of Definition \ref{def:conv}: a stronger topology than the one accessible in the $\beta=0$ case but still weaker than the $TL^p$ topology (attainable in the $\beta=+\infty$ case). These differences in the underlying topologies, arising in the asymptotic behavior analysis, are related to the different behavior of the stochastic geometry of the point cloud when looked at different scales. \\

The specialty of the comparison scale $\sigma_{\e}:=\e\log(\e^{-d})^{1/d}$ can be naively explained in this way. Given a stochastic Poisson point process  $\eta_{\e}$, and denoted by $\eta_{\e}(A)$ the number of points of the point cloud that fall in $A\subset \R^d$, the expected value of $\eta_{\e}(A)$  is  (cf. Remark \ref{rmk:exp})
    \[
    \mathbb{E}[\eta_{\e}(A)]\approx \frac{|A|}{\e^d}.
    \]
Thus, for a point $x\in \eta_{\e}$, the expected number of point $y\in \eta_{\e}$ that will interact with $x$ at scale $s_{\e}$ is approximately $\sfrac{s_{\e}^d}{\e^d}$ ($\approx \eta_{\e}(B_{s_{\e}}(x))$). By invoking some well known properties of Poisson distribution it is easy to see that the probability that a point $x$ interacts, at a scale $s_{\e}$, with either too much, or too few points is approximately
 \[
p_{\e}:= \mathbb{P}\left( \eta_{\e}(B_{s_\e}(x))\geq 2\frac{s_{\e}^d}{\e^d} \ \ \ \text{or}\  \ \ \eta_{\e}(B_{s_\e}(x))\leq \frac{1}{2} \frac{s_{\e}^d}{\e^d} \right)\approx e^{-C_d\frac{s_\e^d}{\e^d}}.
 \]
 Therefore depending on the behavior of $s_{\e}$ compared to $\sigma_{\e}$ several phenomena can occur. If $s_{\e}>>\sigma_{\e}$ then $ p_{\e}$ tends to zero fast enough to guarantee that, almost surely, each point $x$ will always interact with approximately $\sfrac{s_{\e}^d}{\e^d}$ points, resulting in no lack of information when approaching the continuum limit as $\e\rightarrow 0$. This is precisely the approach that has been implemented in \cite{caroccia2020mumford}, \cite{garciatrillos15} to investigate energies on point clouds and that allows to deduce convergence in $TL^p$. If $s_{\e}\approx M\e$ then $p_{\e} \approx e^{-C_d M}$ and thus we expect to see a fixed percentage of points that have the wrong number of interactions at scale $M\e$ resulting in a loss of information. In particular, a uniform bound on the energy $\F_{\e}$, might not contain enough information to deduce convergence, up to a subsequence, in a strong topology. Heuristically speaking, the geometry of the point cloud at this scale, becomes closely related to the geometry of perforated domains: all the points with few interactions act as holes in the domain. For the planar case, in \cite{carbra2021} and \cite{BraPia20}, a fine analysis of the geometry of the point clouds, and the identification of regular subclusters, is required in order to describe the asymptotic behavior of $\F_{\e}$. In the intermediate-regime, given by \eqref{eqn:regime}, the probability $p_{\e} \approx \e^{d \beta^d}$ vanishes as $\e\rightarrow 0$, but not fast enough to guarantee that any point $x$ will be well connected to its neighbors at scale $s_{\e}$ (as in the case $\beta=+\infty$). However it is still fast enough to deduce that the number of wrong points remains under control (see Proposition \ref{prop:decay}). This is now similar to the study of energies on perforated domain, where we have some controls on the size of the holes (see for instance \cite{ansini2002asymptotic}, \cite{braides2020homogenization}). In this sense we can deduce convergence, up to subsequences, in a stronger way than in the case $\beta=0$ but in a weaker sense than in the case $\beta=+\infty$.\\
 
The paper is organized as follows. We briefly introduce the main ingredient of our analysis and the main Theorem \ref{MainThm:CMP} in Section \ref{sct:not}. We then proceed to the study of the stochastic geometry of a general realization of the Poisson point process in Section \ref{sct:stogeo}. Technically this analysis is implemented in this way: we divide $\R^d$ in boxes of size $s_{\e}$ and we consider the family of ``bad boxes" that contains either too few or too many points. We show an upper bound on the number of these boxes (Proposition \ref{prop:decay}) and on the volume of each connected component of the union of the boxes (Lemma \ref{lem:DimConn}). Finally, in Section \ref{sct:proof}, we present the proof of our main result \ref{MainThm:CMP}. For $u_{\e}:\eta_{\e}\cap Q\rightarrow \R$ we consider the averaged function
    \[
    u^I_{\e}:=\frac{1}{\eta_{\e}(Q_{s_{\e}}(I))} \sum_{x\in \eta_{\e}\cap Q_{s_{\e}}(I)}u(x)
    \]
where $Q_{s_{\e}}(I)$ does not belong to the set of ``bad boxes". We then have a function
\[
\hat{u}_{\e}(x):=\sum_{\substack{Q_{s_{\e}}(I) \text{ not a} \\ \text{``bad box"}}} u_{\e}^{I}\ca_{Q_{s_{\e}}(I)}(x).
\]
We use the results of Section \ref{sct:stogeo}, on the stochastic geometry of the process, to build an extension operator that extends the function $\hat{u}_{\e}$ also on the ``bad boxes" by keeping under control the finite differences (Lemma \ref{lem:extension}). We then use a compactness Theorem for functions on lattices to prove compactness for the extended functions. \rc{Notice that this argument fails as soon as we loose control on the size of the family  of bad boxes, namely if $\beta=0$. }

\subsection*{Acknowledgment} The author is grateful to Professor Andrea Braides for the many fruitful discussions on the topics here contained. The author is grateful also to the anonymous referee for his/her very precise report and for all the observations that helped enrich the contribution here contained.

\section{Preliminaries and main result}\label{sct:not}
\subsection{General notation} Given $x=(x_1,\ldots,x_n)\in \R^d$, $r>0$ we denote by 
    \[
    Q_r(x):=\left[x_1-\sfrac{r}{2},x_1+\sfrac{r}{2}\right]\times \ldots\times \left[x_n-\sfrac{r}{2},x_n+\sfrac{r}{2}\right]
    \]
the closed cube centered at $x$ and with  side  length $r$. When $r=1$ and  $x_0=0$ we simply write $Q$. For a Borel set $E$, the notation $|E|$ stands for the Lebesgue measure of $E$. For a discrete set $S:=\{a,b,c,\ldots\}$ the notation $\#(S)$ stands for the cardinality of $S$. We will set, for $A\subset \R^d$,
    \begin{align*}
    \mathcal{I}_s(A)&:=\{J\in s\Z^d\cap A\}.
    \end{align*} 
The notation $\ca_A(x)$ stands for the characteristic function of the set $A$. In all the estimates that follow, $C$ will be a constant independent of the crucial quantities under analysis and that might change from line to line.
% \begin{equation}
%     \ca_A(x):=\left\{ \begin{array}{lr}
% 1         &  \text{if $x\in A$}\\
%  0        &  \text{if $x\notin A$}.
%     \end{array}\right.
% \end{equation}
\subsection{Poisson point process}\label{sbsct:PoissonPP}

Given  a probability space $(\Omega, \mathcal{F},\mathbb{P})$, a point process is a random variable $\eta:\Omega \rightarrow \mathbf{N}_s(\R^d)$, where $\mathbf{N}_s(\R^d)$ is the space of \textit{simple measures} of $\R^d$
\[
\mathbf{N}_s(\R^d):=\left\{\left.\sum_{n\in \N} \delta_{x_n} \ \right| \ \{x_n\}_{n\in \N}\subset \R^d \ \text{any family of distinct points}\  x_n\neq x_k,  \ k\neq n\right\}.
\]
In particular, for any $\omega\in \Omega$, we have that $\eta(\omega)$ is a measure concentrated on a countable family of points. A Poisson point processes, or Poisson point cloud, of intensity $\gamma$ is a point process $\eta:\Omega \rightarrow \mathbf{N}_s(\R^d)$ such that
\begin{itemize}
    \item[a)] for all $k\in\N$ it holds $\mathbb{P}(\eta^{\omega}(A)=k):=e^{-\gamma |A|} \frac{(\gamma |A|)^k}{k!}$;
    \item[b)] for all $B_1,\ldots, B_k$ essentially disjoint borel sets, the random variable $X_j:\Omega\rightarrow \mathbb{N}$, $X_j(\omega):=\eta(\omega)(B_j)$ are independent.
\end{itemize}
For further details on Poisson point processes we refer the reader to \cite{last2017lectures}. Given $\eta$ a Poisson point process we define
    \[
    \eta_{\e}(\omega)(B):=\eta(\omega)(\e^{-1}B).
    \]
Notice that $\eta_{\e}$ is just a Poisson point process with intensity $\gamma \e^{-d}$. Also, for any realization,
\[
\spt(\eta_{\e}(\omega))=\e \spt(\eta(\omega)).
\]
We write, with a slight abuse of notation, $x\in \eta_{\e}(\omega)$, meaning $x\in \spt(\eta_{\e}(\omega))$.
\begin{remark}\label{rmk:exp}
Observe that, for a generic Borel set $A\subset \R^d$,
\begin{align*}
\mathbb{E}[\eta_{\e}(A)]&=\sum_{k=0}^{+\infty} k \mathbb{P}(\eta_{\e}(A)=k)=e^{-\gamma \e^{-d} |A| }\sum_{k=0}^{+\infty} k \frac{\left(\gamma \e^{-d} |A| \right)^{k}}{k!}\\
&=\gamma \e^{-d} |A| e^{-\gamma \e^{-d} |A| }\sum_{k=1}^{+\infty} \frac{\left(\gamma \e^{-d} |A| \right)^{k-1}}{(k-1)!}=\gamma \e^{-d} |A|.
\end{align*}
\end{remark}
\subsection{$p$-Dirichlet energy}\label{sbsct:PDir}
Given a Poisson point cloud $\eta: \Omega\rightarrow \mathbf{N}_s(\R^d)$ with intensity $\gamma$ and $\{s_{\e}\}_{\e>0}$ a sequence converging to zero, we consider the $p$-Dirichlet energy on $A$ of $u:\eta_{\e}(\omega) \rightarrow \R_+$ at scale $s_{\e}$ to be
    \[
    \mathcal{F}^{\omega}_{\e}(u;A):= \e^{d} \sum_{x\in \eta_{\e}(\omega)\cap A} |\mathrm{grad}_{s_{\e}} u(x)|^p
    \]
where
    \[
    |\mathrm{grad}_{s_{\e}} u(x)|^p:=\frac{\e^{d}}{s_{\e}^d}\sum_{y\in \eta_{\e}(\omega) \cap B_{s_{\e}}(x) } \left(\frac{|u(y)-u(x)|}{s_{\e}  }\right)^p.
    \]
In the sequel we omit to specify the dependence of $\F$ and $\eta_{\e}$ on the realization when there is no ambiguity.
    \subsection{Main result}
For $x\in \eta$ we consider the Voronoi cell to be
    \[
    V(x;\eta):= \{y\in \R^d \ | \ |y-x|\leq |y-z| \ \text{for all $z\in \eta \setminus \{x\}$}\}.
    \]
 For a function $u_{\e}:\eta_{\e}\cap Q\rightarrow \R$ we set   $\hat{u}_{\e}:\mathcal{V}_{\e}(\eta_{\e}) \cap Q\rightarrow \R$ 
\[
\hat{u}_{\e}(x):=\sum_{y\in \eta_{\e}\cap Q} u_{\e}(y) \ca_{V(y;\eta_{\e})}(x).
\]
Set
    \[
    \eta^{\alpha} :=\left\{x\in \eta  \ : \ \mathrm{In}(V(x;\eta ))>\alpha, \ \diam(V(x;\eta )<\alpha^{-1}\right\}
    \]
being 
\[
\mathrm{In}(E):=\sup_{r>0}\{\text{there exists $B_r\subset E$}\}.
\]
We refer the reader to Figure \ref{figPPP}. The subcloud $\eta_{\alpha}$ has been firstly introduced in \cite{BraPia20}, where the authors derive the continuum limit of an interface-type energy on Poisson point clouds. \\

For $\a,\e>0$ we set also $\eta^{\alpha}_{\e}:=(\eta_{\e})_{\alpha \e}$, (observe that $\eta^{\alpha}_{\e}= \e\eta^{\alpha}$) and, for a general subcloud of points $\vartheta\subset \eta_{\e}$, we define
\[
\mathcal{V}_{\e}(\vartheta):=\bigcup_{x\in \vartheta} V(x;\eta_\e).
\]
We are in the position to introduce the next Definition.

\begin{figure}
    \centering
    \includegraphics[scale=0.8]{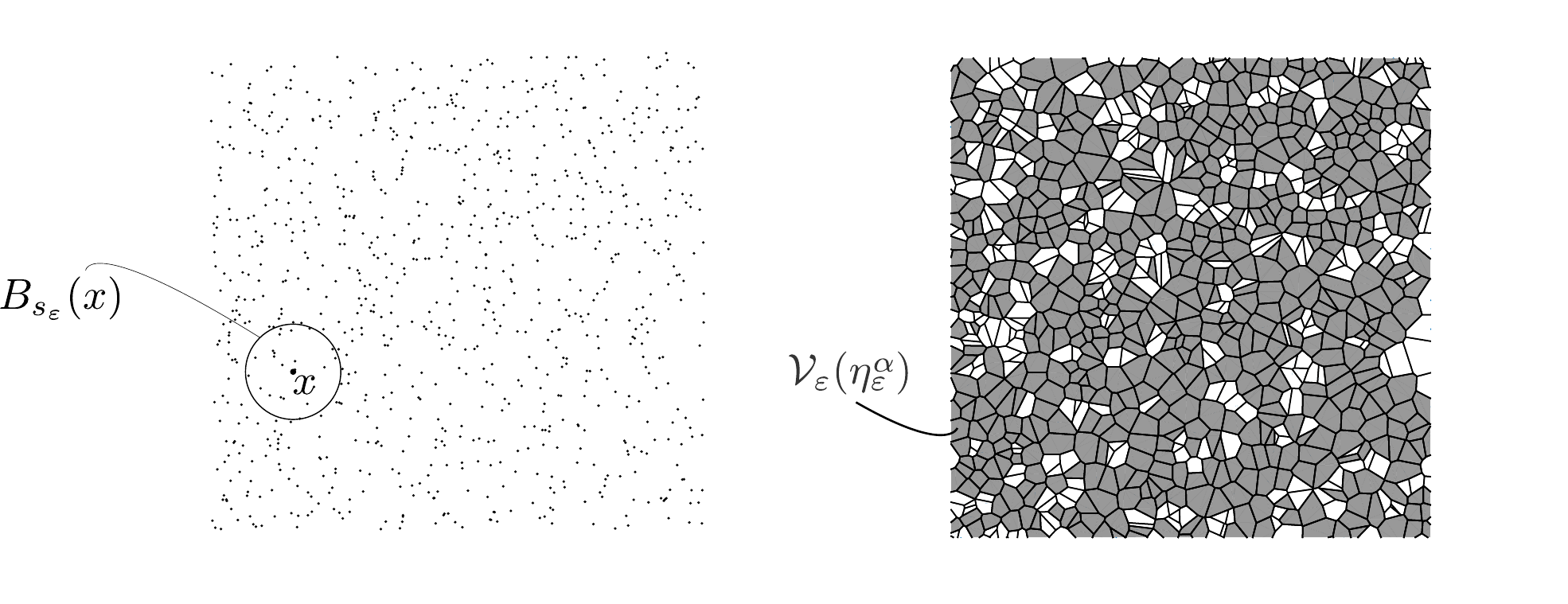}
    \caption{\rc{An illustration of a Poisson point cloud $\eta_{\e}(\omega)$ on the left. On the right  an illustration of  $\mathcal{V}_{\e}(\eta^{\alpha}_{\e})$  (generated by a sub-cluster $\eta^{\alpha}_{\e}$)  depicted in dark grey. }}
    \label{figPPP}
\end{figure}
\begin{definition}[Convergence]\label{def:conv}
We say that a sequence $u_{\e}\stackrel{p}{\longrightarrow} u$ in $A$ if for any $\alpha>0$ it holds
    \[
    \lim_{\e\rightarrow 0}\int_{\mathcal{V}_{\e}(\eta^{\alpha}_{\e})\cap A}|\hat{u}_{\e}-u|^p\d x=0.
    \]

\end{definition}
With this notion of convergence we state our main result.
\begin{theorem}\label{MainThm:CMP}
Let $\{u_{\e}:\eta_{\e} \rightarrow \R\}_{\e>0}$. Let $A\supset Q$ be any open set containing $Q$ and assume that $s_{\e}$ is a sequence satisfying \eqref{eqn:regime}. Then for almost all realization  $\omega\in \Omega$  the following holds. Suppose that
\begin{equation}\label{eqn:bndCMP}
    \sup_{\e>0}\left\{\F_{\e}^{\omega}(u_{\e};A) + \sum_{x\in \eta_{\e}(\omega)\cap Q} u_{\e}(x)^p \e^d\right\}<+\infty.
\end{equation}
Then there exists a subsequence $\{u_{\e_n}\}_{n\in \N}\subset\{u_{\e}\}_{\e>0} $ and a function $u\in W^{1,p}(Q)$ such that $u_{\e_n} \stackrel{q}{\longrightarrow} u$ in $Q$ for all $q<p$, in the sense of Definition \ref{def:conv}.
\end{theorem}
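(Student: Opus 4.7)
The plan is to transport the compactness problem from the random point cloud to the regular cubic lattice of spacing $s_\e$, where classical tools for discrete $W^{1,p}$-energies apply. This reduction is viable precisely in the regime $0<\beta<+\infty$: by Proposition \ref{prop:decay} the bad boxes of $\mathcal{I}_{s_\e}(A)$ (those whose point count is far from the mean $s_\e^d/\e^d$) have vanishing total volume almost surely, and by Lemma \ref{lem:DimConn} they are organized in connected components of bounded size in units of $s_\e$, so that they can be bridged across without a blow-up of energy.

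Fix one such realization. On each good box $Q_{s_\e}(I)$ I introduce the average
\[
u_\e^I := \frac{1}{\eta_\e(Q_{s_\e}(I))}\sum_{y\in \eta_\e\cap Q_{s_\e}(I)} u_\e(y).
\]
Two adjacent good boxes together contain $\sim 2 s_\e^d/\e^d$ points lying at mutual distance at most $\sqrt{d}\,s_\e$, so after a harmless inflation of the averaging scale each such pair contributes a finite difference to $\F_\e^\omega(u_\e;A)$. A Jensen inequality applied to $u_\e^I-u_\e^J$ then yields a discrete $W^{1,p}$-type bound
\[
\sum_{I,J\text{ adjacent good}} s_\e^{d-p}|u_\e^I-u_\e^J|^p + \sum_{I\text{ good}} s_\e^d|u_\e^I|^p \leq C\Bigl(\F_\e^\omega(u_\e;A)+\sum_{x\in\eta_\e\cap Q}|u_\e(x)|^p\e^d\Bigr).
\]
Using the size control on connected components of bad boxes, I invoke the extension operator of Lemma \ref{lem:extension} to extend $\{u_\e^I\}_{I\text{ good}}$ to a full lattice function $\tilde u_\e$ on $\mathcal{I}_{s_\e}(A)$ keeping the above bound true up to constants. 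This is the crux of the argument: the extension must not pick up more than a bounded share of discrete $p$-Dirichlet energy while crossing a component of bad boxes, which requires the diameter of each such component to be small compared to $s_\e$ with room to spare, precisely what Lemma \ref{lem:DimConn} provides in the regime $\beta>0$ and what would fail in the nearest-neighbor regime $\beta=0$, consistently with the introductory heuristic.

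Viewed as a piecewise constant function on $A$, the sequence $\{\tilde u_\e\}$ has uniformly bounded discrete $W^{1,p}(A)$-norm, and a standard compactness result for piecewise constant functions on vanishing cubic lattices (Rellich--Kondrachov plus a discrete gradient estimate) yields a subsequence converging to some $u\in W^{1,p}(A)$ strongly in $L^q(A)$ for every $q<p$. To translate this into the convergence of Definition \ref{def:conv}, fix $\alpha>0$ and observe that the regularity conditions on $V(x;\eta_\e)$ defining $\eta^\alpha_\e$ force the local density of $\eta_\e$ near $x$ to be comparable to $\e^{-d}$; therefore every point of $\eta^\alpha_\e$ lies, up to a vanishing-volume set, in a good box. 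On each good box $Q_{s_\e}(I)$, $\tilde u_\e$ and $\hat u_\e$ equal respectively $u_\e^I$ and $u_\e(y)$ on the Voronoi cell $V(y;\eta_\e)$, and a local discrete Poincar\'e inequality gives
\[
\sum_{y\in\eta_\e\cap Q_{s_\e}(I)} |u_\e(y)-u_\e^I|^p\,\e^d \leq C s_\e^p\,\F_\e^\omega(u_\e;Q_{s_\e}(I)).
\]
Summing over good boxes shows $\|\hat u_\e-\tilde u_\e\|_{L^p(\mathcal{V}_\e(\eta^\alpha_\e)\cap Q)}\to 0$, and combining with $\tilde u_\e\to u$ in $L^q(A)$ yields $u_\e\stackrel{q}{\longrightarrow}u$ for every $q<p$. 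The gap between $p$ and $q$ reflects that the extension may concentrate $L^p$-mass on the (vanishing-volume) union of bad boxes, so equi-integrability of $|\tilde u_\e|^p$ is not automatic and strong compactness is obtained only at subcritical integrability.
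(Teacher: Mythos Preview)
Your strategy is exactly the paper's: average on good boxes, extend across bad boxes via Lemma \ref{lem:extension}, apply lattice compactness (Lemma \ref{lem:alici}), then compare $\hat u_\e$ to the piecewise-constant extension by a local Poincar\'e. Two points, however, deserve correction.

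\textbf{The extension does not preserve the $p$-bound.} You write that Lemma \ref{lem:extension} keeps ``the above bound true up to constants'' and then that $\{\tilde u_\e\}$ has uniformly bounded discrete $W^{1,p}$-norm, concluding $u\in W^{1,p}$ directly from lattice compactness. This is not what Lemma \ref{lem:extension} delivers: the output bound \eqref{eqn:bndLimit2} is only at exponent $q<p$, because the estimate across a connected component of bad boxes carries a factor $\log(s_n^{-1})^{pq/(p-q)}$ that blows up as $q\uparrow p$. Consequently Lemma \ref{lem:alici} only yields $u\in W^{1,q}(Q)$ for every $q<p$, which is strictly weaker than the asserted $u\in W^{1,p}(Q)$. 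The paper recovers the missing regularity by a separate argument: on the good region the \emph{un}extended averages do satisfy a genuine $p$-bound (your displayed inequality, or \eqref{energycontrol} in the paper), so $(\nabla\widehat{Tu_n})\,\ca_{Q\setminus E^\kappa_{s'_n}}$ is bounded in $L^p$ and has a weak-$L^p$ limit $V$; since $|E^\kappa_{s'_n}|\to 0$ and $\nabla\widehat{Tu_n}\rightharpoonup \nabla u$ in $L^q$, one identifies $V=\nabla u$ and concludes $\nabla u\in L^p$. This step is missing from your sketch.

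\textbf{The link between $\eta^\alpha_\e$ and good boxes.} Your justification ``the regularity conditions defining $\eta^\alpha_\e$ force the local density near $x$ to be comparable to $\e^{-d}$, therefore every point of $\eta^\alpha_\e$ lies in a good box'' does not hold: membership in $\eta^\alpha_\e$ constrains the Voronoi cell of a single point at scale $\e$, while the good/bad dichotomy concerns the count in a box of much larger scale $s_\e$. A point of $\eta^\alpha_\e$ can perfectly well sit inside a bad box. The paper does not attempt any such implication; it simply splits the integral over $\mathcal{V}_\e(\eta^\alpha_\e)\cap Q$ into the good region (where your Poincar\'e argument gives $L^p$-smallness) and the bad region $E^\kappa_{s'_n}$, and on the latter uses the $L^p$-boundedness of $\hat u_\e$ on $\mathcal{V}_\e(\eta^\alpha_\e)$ (guaranteed by the diameter bound in the definition of $\eta^\alpha$) together with $|E^\kappa_{s'_n}|\to 0$ and H\"older to get $L^q$-smallness. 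Your final paragraph essentially says this, but the intermediate claim that $\|\hat u_\e-\tilde u_\e\|_{L^p(\mathcal{V}_\e(\eta^\alpha_\e)\cap Q)}\to 0$ is too strong; only the $L^q$-norm vanishes on the full domain.
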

\begin{remark}
Notice that, since the energy is non local, to have compactness on $Q$ we need some control also on the interactions between $x\in Q, y\in Q^c$ with $|x-y|\leq s_{\e}$. This is why we require that the energy bound in \eqref{eqn:bndCMP} to hold on a slightly bigger open set $A\supset Q$. As it will be clear from the proof, Theorem \ref{MainThm:CMP} holds as well if we replace the cube $Q$ with a generic open set.
\end{remark}
\section{Stochastic geometry of the point cloud}\label{sct:stogeo}
For any realization $\omega$, given $s>0$ and $0<\kappa<1$ a parameter we set
    \begin{align}
    \mathcal{I}^{\kappa,\omega}_s(Q)&:=\left\{J\in \mathcal{I}_s(Q) \ : \  \left|\eta_{\e}(\omega)(Q_s(J))-\gamma\frac{s^d}{\e^d}\right|\geq \kappa\gamma\frac{s^d}{\e^d} \right\}\label{eqn:index}\\
    E^\kappa_s(\omega)&:=\bigcup_{  \mathcal{I}^{\kappa, \omega }_s(Q)}  Q_s(J).\label{eqn:set}
       \end{align}
The next Lemma is a standard concentration inequality for random variables (see for instance \cite[Corollary 2.11]{boucheron2013concentration}). Again we will omit the dependence on $\omega$ to lighten the notation.    
\begin{lemma}[Cramer-Chernoff concentration bounds]\label{lemma:CCconcc}
If $X$ is a random variable with Poisson distribution then
    \[
    \mathbb{P}(|X-\mathbb{E}[X]|>t)\leq \exp\left(-\frac{2t^2}{(t+\mathbb{E}[X])}\right).
    \]
\end{lemma}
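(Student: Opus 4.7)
The plan is to follow the classical Cramér--Chernoff exponential moment method, which is the standard route to concentration inequalities of this Bernstein/Bennett type. Since the statement is attributed verbatim to Corollary~2.11 of Boucheron--Lugosi--Massart, one could simply invoke that reference; for a self-contained proof the argument factors into three small steps: (i) compute the moment generating function of a Poisson random variable, (ii) apply the exponential Markov inequality in the form $\mathbb{P}(X-\mu \ge t) \le e^{-\lambda t}\mathbb{E}[e^{\lambda(X-\mu)}]$ for $\lambda>0$, and (iii) optimize (or bound) in $\lambda$ to extract the Gaussian/Poissonian tail.

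Concretely, writing $\mu := \mathbb{E}[X]$ for the Poisson parameter, the generating function is $\mathbb{E}[e^{\lambda X}] = \exp(\mu(e^\lambda - 1))$ for every $\lambda \in \R$, as can be verified by expanding the series $\sum_{k\ge 0} e^{\lambda k} e^{-\mu}\mu^k/k!$. Consequently, for $\lambda > 0$,
\[
\mathbb{P}(X - \mu \ge t) \le \exp\bigl(\mu(e^{\lambda}-1-\lambda) - \lambda t\bigr),
\]
and, symmetrically, for $\lambda > 0$ applied to $-X$,
\[
\mathbb{P}(X - \mu \le -t) \le \exp\bigl(\mu(e^{-\lambda}-1+\lambda) - \lambda t\bigr).
\]
Both upper bounds are driven by the log-MGF $\psi(\lambda):=\mu(e^{\pm\lambda}-1\mp\lambda)$ of a centered Poisson variable, which is the prototypical sub-gamma cumulant generating function.

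The last step is to produce the prescribed envelope $\exp(-2t^{2}/(t+\mu))$ from these two inequalities. The natural way is the sub-gamma bound $e^{\pm\lambda}-1\mp\lambda \le \lambda^{2}/(2(1-c|\lambda|))$ (valid on an appropriate range of $\lambda$), which controls the Legendre transform of $\psi$ and yields a bound of the form $\exp(-t^{2}/(2(\mu + c t)))$. One then plugs in the optimal choice $\lambda = t/(\mu + c t)$, and adds the two one-sided tails with the factor $2$ being absorbed into the exponent.

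The main obstacle — and the only non-formal step — is verifying the deterministic inequality for $e^{\pm\lambda}-1\mp\lambda$ and checking that the constants match the precise form written in the statement; this is a straightforward but somewhat delicate real-analysis computation, made cleanest by working directly with the Legendre dual $\psi^{*}(u) = (1+u)\log(1+u) - u$ of $e^{\lambda}-1-\lambda$ and using the elementary estimate $\psi^{*}(u) \ge u^{2}/(2(1+u/3))$. Since the quantitative form of the bound is standard and explicitly referenced in the text, I would simply import the statement from \cite{boucheron2013concentration} and use it as a black box in subsequent sections, rather than reproducing its derivation in detail.
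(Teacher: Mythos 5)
Your strategy---Poisson moment generating function, exponential Markov inequality, then a Bennett/Bernstein simplification of the Legendre dual---is the standard Cram\'er--Chernoff route, and your decision to ultimately import the bound from \cite{boucheron2013concentration} as a black box coincides exactly with what the paper does: it offers no proof of this lemma, only the citation.

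The problem is the step you defer as ``straightforward but somewhat delicate'': checking that the constants match the stated form $\exp(-2t^2/(t+\mathbb{E}[X]))$. That step cannot be completed, because the inequality as written is false. Writing $\mu=\mathbb{E}[X]$, your own computation gives the optimal upper-tail Chernoff exponent $\mu h(t/\mu)$ with $h(u)=(1+u)\log(1+u)-u$; at $t=\mu$ this is $(2\log 2-1)\mu\approx 0.386\,\mu$, whereas the claimed exponent $2t^2/(t+\mu)$ equals $\mu$ there. Since the Chernoff bound is tight on the exponential scale ($\mathbb{P}(X=2\mu)=e^{-\mu}\mu^{2\mu}/(2\mu)!\sim e^{-(2\log 2-1)\mu}/\sqrt{4\pi\mu}$), no argument can yield the stated decay. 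The discrepancy is already visible in the Gaussian regime: for $t\ll\mu$ the claim asserts an exponent $\approx 2t^2/\mu$, four times the CLT-correct $t^2/(2\mu)$; concretely, for $\mu=100$ and $t=10$ the claimed bound is $e^{-200/110}\approx 0.16$ while the true probability is $\approx 0.29$. (Separately, the factor $2$ from adding the two one-sided tails cannot be ``absorbed into the exponent'' uniformly, since the right-hand side is near $1$ for small $t$.) What your method does prove is the Bernstein form $\mathbb{P}(|X-\mu|>t)\le 2\exp\bigl(-t^2/(2(\mu+t/3))\bigr)$, and you should state and use that instead. Nothing downstream breaks: Lemma \ref{lem:DimConn} and Proposition \ref{prop:decay} apply the bound with $t=\kappa\mu$ and only need a decay of the form $\exp(-c(\kappa)\,s_\e^d/\e^d)$ with $c(\kappa)>0$, although the corrected constant $c(\kappa)$ would shift the explicit thresholds $\kappa_0$ and $\beta>\gamma^{-1/d}$ appearing in Proposition \ref{prop:decay}.
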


We also employ the well-known Borel-Cantelli Lemma (see \cite{loeve1977elementary}).
\begin{lemma}[Borel-Cantelli Lemma]\label{lem:bor}
If $\{E_n\subseteq \Omega\}_{n\in \N}$ is a sequence of events such that
\[
\sum_{n\in \N} \mathbb{P}(E_n)<+\infty
\]
then
\[
\mathbb{P}\left(\limsup_{n\rightarrow +\infty} E_n\right)= \mathbb{P}\left(\bigcap_{n=1}^{+\infty}\bigcup_{k\geq n} E_k \right)=0.
\]
\end{lemma}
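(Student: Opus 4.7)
The plan is to reduce the statement to the two standard measure-theoretic facts that underlie Borel--Cantelli: continuity of a probability measure from above along a decreasing sequence, and countable subadditivity. I would start by giving a name to the tail unions, setting $A_n:=\bigcup_{k\geq n} E_k$, and observing that $A_{n+1}\subseteq A_n$ for every $n\in\N$, so $\{A_n\}_{n\in\N}$ is a decreasing sequence of events whose intersection is exactly $\limsup_{n\to\infty} E_n=\bigcap_{n=1}^{+\infty} A_n$.

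Next I would invoke continuity of $\mathbb{P}$ from above, which is valid for any decreasing sequence of events since $\mathbb{P}(A_1)\leq 1<+\infty$: this gives
\[
\mathbb{P}\left(\limsup_{n\to+\infty} E_n\right)=\mathbb{P}\left(\bigcap_{n=1}^{+\infty} A_n\right)=\lim_{n\to+\infty}\mathbb{P}(A_n).
\]
Then I would bound each $\mathbb{P}(A_n)$ from above using countable subadditivity of $\mathbb{P}$, obtaining $\mathbb{P}(A_n)\leq \sum_{k\geq n}\mathbb{P}(E_k)$. Under the standing hypothesis $\sum_{n\in\N}\mathbb{P}(E_n)<+\infty$, the right-hand side is the tail of a convergent series of non-negative terms, hence tends to $0$ as $n\to+\infty$.

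Combining the two displays yields $\mathbb{P}(\limsup_n E_n)=\lim_n \mathbb{P}(A_n)=0$, which is the claim. There is no genuine obstacle here: the only subtle points are (i) that continuity from above requires the first set to have finite measure, which is automatic in a probability space, and (ii) that we may interchange sum and limit in the tail only because the series of probabilities converges, which is exactly the assumption. Everything else is formal manipulation of the $\limsup$ of sets, so this is a clean three-line proof rather than a lemma requiring a new idea; I would present it in that compressed form.
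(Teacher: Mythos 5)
Your argument is correct and is the standard textbook proof of the (first) Borel--Cantelli lemma: monotonicity of the tail unions $A_n=\bigcup_{k\geq n}E_k$, continuity of $\mathbb{P}$ from above, countable subadditivity, and the vanishing of the tail of a convergent series. The paper does not prove this statement at all --- it records it as a known result with a reference to the literature --- so there is no proof to compare against; your three-line version is exactly what one would insert if a proof were wanted.
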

\begin{remark}[Chebishev inequality]\label{rmk:cheb}
Given $X:\Omega \rightarrow \R$ a random variable, a useful way to estimate the probability of the event $A:=\{\omega\in \Omega \ : \ X(\omega)\geq t\}$ is the so-called Chebishev inequality
    \[
    \mathbb{P}(A)=\int_A \d \mathbb{P}(\omega)\leq \frac{1}{t}\int_A X(\omega)\d \mathbb{P}(\omega)\leq \frac{1}{t} \int_\Omega X(\omega)\d \mathbb{P}(\omega)= \frac{\mathbb{E}[X]}{t}.
    \]
\end{remark}
Let $\{X_J \ : \ J\in \mathcal{I}_s(\R^d)\}$  be a countable family of independent Bernoulli random variables taking value $X_J=1$ with  probability $p$. The probability $\mathbb{P}_p$ is defined 
\[
\mathbb{P}_p(X_{J_1}=1,\ldots, X_{J_k}=1):=p^k
\]
and then extended to the $\sigma$-algebra generated by the cylinders
\[
A(J_1,\ldots,J_k)=\{X_{J_1}=1,\ldots, X_{J_k}=1\}.
\]
\begin{definition}\label{def:pathconn} 
We say that a subset of indexes $\mathcal{J}\subset \mathcal{I}_s(\R^d)$ is path connected if for any $J,J'$ there exists a family (called \textit{a path in $\mathcal{J}$}), $\{J_1,\ldots,J_k\}$ with 
\begin{itemize}
    \item[a)] $J_1=J, J_k=J'$;
    \item[b)] $J_i\in \mathcal{J}$ for all $i=1,\ldots k$;
    \item[c)] $|J_i-J_{i+1}|=s$ for all $i=1,\ldots, k-1$.
\end{itemize}
For $ \mathcal{J} \subset \mathcal{I}_s(Q)$, $J\in \mathcal{J}$ the set 
    \[
    U(J):=\{J'\in \mathcal{J} \ : \ \text{$J'$ is connected to $J$ through a path in $\mathcal{J}$}\}
    \]
is called \textit{the connected component of $\mathcal{J}\subset \mathcal{I}_s(\R^d)$ containing $J$}. Clearly if $J'\in U(J)$ then $U(J')=U(J)$ (see Figure \ref{fig:csi}).
\end{definition}
% For any $\omega \in \Omega$, by defining
% \begin{equation}
%     X_J^{\e}(\omega):=\begin{array}{lr}
%       1  & \text{if $J\in \mathcal{I}_{s_{\e}^{\kappa,\omega}(Q)}$}  \\
%       0  & \text{if $J\notin \mathcal{I}_{s_{\e}^{\kappa,\omega}(Q)}$} 
%     \end{array}
% Then

% \end{equation}
We will also employ the following Bernoulli  bond  percolation result. Lemma \ref{lem:perc} summarizes a combination of standard results in Bernoulli bond percolation theory (see  \cite{aizenman1987sharpness}, \cite{aizenman1984tree}, \cite{bollobas2006percolation},  \cite{grimmett2013percolation}, \cite{menshikov1986coincidence}). We underline that the choice of the origin $J=0$ in the following statement plays no role. In particular \eqref{eqn:oneCon},\eqref{eqn:twoCon} remain valid for a generic $J$.

\begin{lemma}\label{lem:perc}
Let $\{X_J \ : \ J\in \mathcal{I}_s(\R^d)\}$ be a countable family of independent Bernoulli random variables taking value $X_J=1$ with  probability $p$. Then if $p<p_c(d)$, being $p_c$  a critical value depending on the dimension only, the set $\{J\in \mathcal{I}_s(\R^d) : X_J=0\}$ is almost surely path connected. Moreover, there exists $\psi_p$ depending on $p$ only such that, setting $U(0)$ to be the (possibly empty) connected component of $\{J\in \mathcal{I}_s(\R^d) : X_J=1\}$ containing $J=0$, it holds
 \begin{equation}\label{eqn:oneCon}
    \mathbb{P}_{p}(\#(  U(0) )  > N)\leq e^{-\psi_p N } \ \ \ \text{for all $N\geq1$}.
\end{equation}
Finally for all $p<p'$.
\begin{equation}\label{eqn:twoCon}
 \mathbb{P}_{p}(\#(  U(0) )  > N)\leq  \mathbb{P}_{p'}(\#(  U(0) )> N).
\end{equation}
\end{lemma}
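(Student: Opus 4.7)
The plan is to deduce Lemma \ref{lem:perc} by collecting three standard facts from subcritical Bernoulli site percolation on $s\mathbb{Z}^d$, one for each assertion, so that the work is essentially bookkeeping and a careful choice of $p_c(d)$.

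For the monotonicity \eqref{eqn:twoCon}, I would use the canonical coupling: introduce a single i.i.d.\ family $\{U_J\}_{J \in \mathcal{I}_s(\R^d)}$ of uniform $[0,1]$ random variables and set $X_J^{(q)} := \mathbbm{1}_{\{U_J \leq q\}}$ for each $q\in(0,1)$. Then $p<p'$ gives $X_J^{(p)} \leq X_J^{(p')}$ pointwise, so the open cluster of the origin under $\mathbb{P}_p$ is contained in the one under $\mathbb{P}_{p'}$, and the stochastic comparison of $\#U(0)$ is immediate.

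For the exponential decay \eqref{eqn:oneCon}, I would invoke the sharpness-of-the-phase-transition theorem of Aizenman--Barsky \cite{aizenman1987sharpness} and Menshikov \cite{menshikov1986coincidence}: for every $p$ strictly below the site percolation threshold $p_c(d)$ on $\mathbb{Z}^d$, the radius of the open cluster at the origin has an exponentially decaying tail. A dimensional volume estimate (a connected subset of $\mathbb{Z}^d$ of cardinality $N$ has radius at least $c_d N^{1/d}$) then upgrades the radius decay to the cardinality decay \eqref{eqn:oneCon}, producing $\psi_p = \psi_p(d) > 0$ as required.

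The main obstacle, I expect, is the almost sure path connectedness of $\{J : X_J = 0\}$. I would approach this by a Peierls-type detour argument: by \eqref{eqn:oneCon} together with the Borel--Cantelli Lemma \ref{lem:bor}, every $1$-cluster is almost surely finite, and only finitely many of them meet any fixed bounded region. Given two $0$-sites $J, J'$, one takes a straight lattice path joining them and successively detours around each of the finitely many $1$-clusters it crosses; each detour is feasible in $d \geq 2$ because finite sets cannot disconnect $\mathbb{Z}^d$. To ensure the detours remain strictly inside the $0$-set, one actually shrinks $p_c(d)$ below its nominal value so that the density of $1$'s is small enough to preclude surrounding obstructions; this threshold still depends only on the dimension, and the monotonicity \eqref{eqn:twoCon} guarantees that the first two assertions are preserved for this smaller $p_c(d)$.
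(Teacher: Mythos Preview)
The paper does not actually prove this lemma: it is stated as a summary of standard results in Bernoulli percolation theory, with references to \cite{aizenman1987sharpness}, \cite{aizenman1984tree}, \cite{bollobas2006percolation}, \cite{grimmett2013percolation}, \cite{menshikov1986coincidence}, and no argument is supplied. Your proposal therefore goes beyond what the paper itself offers, and your overall strategy---coupling for \eqref{eqn:twoCon}, sharpness of the phase transition for subcritical decay, and a detour argument for connectedness of the closed set (exploiting the freedom to take $p_c(d)$ small)---is the right shape.

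There is, however, one concrete gap in your derivation of \eqref{eqn:oneCon}. The sharpness theorems of Menshikov and Aizenman--Barsky give exponential decay of the \emph{radius} of the open cluster, i.e.\ $\mathbb{P}_p(0\leftrightarrow\partial Q_R)\leq e^{-cR}$. Combining this with the isoperimetric observation that a connected set of cardinality $N$ has radius at least $c_d N^{1/d}$ yields only
\[
\mathbb{P}_p\bigl(\#U(0)>N\bigr)\leq e^{-c\,N^{1/d}},
\]
which is strictly weaker than the claimed $e^{-\psi_p N}$ and would not suffice for the application in Lemma~\ref{lem:DimConn}: there one takes $N=\frac{\Lambda'}{\psi_{p_0}}\log(s_n^{-1})$ and needs the resulting bound to be a power of $s_n$. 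Exponential decay of the cluster \emph{volume} throughout the subcritical phase is a separate (harder) fact, due to Kesten and Aizenman--Newman; see \cite{grimmett2013percolation}. Alternatively, since the lemma only asks for \emph{some} dimensional threshold and you are already prepared to shrink it, you can sidestep this entirely with the elementary lattice-animal count: the number of connected subsets of $s\Z^d$ of size $k$ containing the origin is at most $C_d^{\,k}$, so $\mathbb{P}_p(\#U(0)\geq N)\leq \sum_{k\geq N}(C_d\,p)^k$, which is $\leq e^{-\psi_p N}$ as soon as $p<1/C_d$.
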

\rc{We now prove a statistical estimate which allows us to obtain information on those regions of the point cloud having the wrong number of interactions.}
\begin{lemma}\label{lem:DimConn}
Suppose that 
   \[
   \limsup_{\e\rightarrow +\infty} \frac{\e}{s_{\e}}=0.
   \]
Then there exists a subsequence $\{\e_n\}_{n\in\N}$ and a constant $\Lambda$ such that almost surely the following hold. For all $\kappa\in(0,1)$ we can find $n_0$ (depending on $\kappa$ and the realization only) for which (setting for shortness $s_{n}=s_{\e_{n}}$), $\mathcal{I}_{s_n}(Q)\setminus \mathcal{I}_{s_n}^{\kappa}(Q)$ is path connected for any $n\geq n_0$ and it holds
\begin{equation}\label{eqn:bndonconn}
    \sup\left\{\#(U) \ : \ \text{$U$ conn. comp. of $\mathcal{I}_{s_n}^{\kappa}(Q)$}\right\}\leq \Lambda  \log(s_n^{-1}) \ \  \text{for all $n\geq n_0$}.
\end{equation}
\end{lemma}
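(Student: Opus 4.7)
The plan is to control the ``bad'' indices $\mathcal{I}_{s_n}^{\kappa}(Q)$ by coupling them with a subcritical Bernoulli site percolation process on $\mathcal{I}_{s_n}(\R^d)$, using Cramer--Chernoff to guarantee subcriticality, the exponential cluster tail from Lemma \ref{lem:perc} to bound component sizes, and Borel--Cantelli along a suitably sparse subsequence.

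First I fix $J\in \mathcal{I}_{s_n}(Q)$ and observe that $\eta_{\e_n}(Q_{s_n}(J))$ is Poisson-distributed with mean $\lambda_n:=\gamma s_n^d/\e_n^d$ (Remark \ref{rmk:exp}). Applying Lemma \ref{lemma:CCconcc} with $t=\kappa \lambda_n$ yields
\[
q_n^\kappa := \mathbb{P}\bigl(J\in \mathcal{I}_{s_n}^{\kappa}(Q)\bigr) \leq \exp\!\left(-\frac{2\kappa^2 \lambda_n}{1+\kappa}\right),
\]
and since $\e/s_\e\to 0$ forces $\lambda_n\to+\infty$, $q_n^\kappa$ decays super-polynomially in $s_n^{-1}$ for each fixed $\kappa$. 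Because distinct cubes $Q_{s_n}(J)$ are essentially disjoint, property b) in the definition of a Poisson point process makes the indicators $X_J^\kappa := \ca_{\{J\in \mathcal{I}_{s_n}^{\kappa}(Q)\}}$ independent; they are therefore stochastically dominated by i.i.d.\ Bernoulli$(q_n^\kappa)$ variables.

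Now fix once and for all $p_0<p_c(d)$ and set $\psi:=\psi_{p_0}$ from Lemma \ref{lem:perc}, and define $\Lambda := 2d/\psi$, which is independent of $\kappa$. For every fixed $\kappa$, eventually $q_n^\kappa<p_0$, so combining \eqref{eqn:twoCon} with \eqref{eqn:oneCon} yields $\mathbb{P}(\#U^\kappa(J)>N)\leq e^{-\psi N}$ for every $J\in \mathcal{I}_{s_n}(Q)$, where $U^\kappa(J)$ is the connected component of $\mathcal{I}_{s_n}^{\kappa}(Q)$ through $J$. Since $\#\mathcal{I}_{s_n}(Q)\leq C s_n^{-d}$, a union bound with $N=\Lambda\log(s_n^{-1})$ gives
\[
\mathbb{P}\Bigl(\sup\{\#U\;:\;U\text{ conn.\ comp.\ of }\mathcal{I}_{s_n}^{\kappa}(Q)\} > \Lambda\log(s_n^{-1})\Bigr) \leq C\, s_n^{-d}\cdot s_n^{\psi\Lambda} = C\, s_n^{d}.
\]
Extracting the subsequence $\{\e_n\}$ so that $s_n\leq n^{-2/d}$ (which is always possible under \eqref{eqn:regime}) makes this bound summable in $n$. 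Applying Borel--Cantelli (Lemma \ref{lem:bor}) to the countable family $\kappa_k=1/k$ and intersecting the full-measure events, almost surely for every $k$ there is $n_0(\omega,k)$ beyond which \eqref{eqn:bndonconn} holds with $\kappa=1/k$. For a general $\kappa\in(0,1)$, pick $k$ with $1/k\leq \kappa$: the inclusion $\mathcal{I}_{s_n}^{\kappa}(Q)\subseteq \mathcal{I}_{s_n}^{1/k}(Q)$ implies that each connected component of the former is contained in one of the latter, transferring the bound with the same universal $\Lambda$.

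It remains to show that $\mathcal{I}_{s_n}(Q)\setminus \mathcal{I}_{s_n}^{\kappa}(Q)$ is path connected for $n$ large. I expect this to be the genuine obstacle: the cluster-size bound is a purely statistical output of percolation, while connectedness of the good set is a \emph{geometric} consequence that needs to use $d\geq 2$. The key observation is that the cubic grid $\mathcal{I}_{s_n}(Q)$ has lattice diameter of order $s_n^{-1}$, while every bad component has (lattice) diameter at most $\Lambda\log(s_n^{-1})=o(s_n^{-1})$; by a standard discrete planar/Alexander duality type argument (valid for $d\geq 2$), a set that separates a $d$-cube must contain a connected component whose diameter is comparable to the side of the cube, which is incompatible with the bound on the bad clusters for $n$ large. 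Modulo this topological step, the lemma follows.
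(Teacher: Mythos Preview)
Your argument is essentially the same as the paper's: Cram\'er--Chernoff to push the bad-box probability below $p_c(d)$, then the exponential cluster tail from Lemma~\ref{lem:perc} combined with a union bound over the $O(s_n^{-d})$ sites, and Borel--Cantelli along a sparse subsequence. Your choice $\Lambda=2d/\psi$ is exactly the paper's $\Lambda'/\psi_{p_0}$ with $\Lambda'=2d$ (the paper takes any $\Lambda'>d$), and your explicit discretization $\kappa_k=1/k$ together with the monotonicity $\mathcal{I}_{s_n}^{\kappa}(Q)\subseteq \mathcal{I}_{s_n}^{1/k}(Q)$ makes the ``for all $\kappa$'' uniformity cleaner than in the paper, where this step is left implicit.

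The one genuine divergence is the path-connectedness of the good set. The paper does \emph{not} argue via cluster diameters and a separation/duality statement; it simply invokes the first assertion of Lemma~\ref{lem:perc}, namely that for $p<p_c(d)$ the set $\{J:X_J=0\}$ is almost surely path connected, and applies this once $q_n^\kappa<p_c(d)$. Your route would also work, but the step you flag as ``modulo this topological step'' is not entirely innocent: the standard duality argument produces a $*$-connected separating piece of the bad set, not a nearest-neighbour one, so to feed it back into the cluster-size bound you either need the analogous exponential tail for $*$-clusters (true, but a different $\psi$) or an extra argument to pass from $*$-components to components in the sense of Definition~\ref{def:pathconn}. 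The paper sidesteps all of this by absorbing the topology into the cited percolation lemma.
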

\begin{proof}
For any $J\in \mathcal{I}_{s_{\e}}^{\kappa}(Q)$ consider $U(J)$ to be the (possibly empty) connected component of  $\mathcal{I}_{s_{\e}}^{\kappa}(Q)$ containing $J$. Setting, for $J\in \mathcal{I}_{s_{\e}}(Q)$,    \begin{equation}
    X_J^\e :=\left\{\begin{array}{lr}
        1 & \text{if $J\in \mathcal{I}_{s_\e}^{\kappa}(Q)$} \\
        0 & \text{otherwise}.
    \end{array} \right.    
    \end{equation}
we have, by invoking Lemma \ref{lemma:CCconcc}, that 
\[
p_{\e}(\kappa)=\mathbb{P}(X_J^{\e}=1)\leq e^{-\frac{2\kappa^2}{1+\kappa} \frac{s_{\e}^d}{\e^d}}.
\]
 In particular, for any $\kappa>0$, $p_{\e}(\kappa)\rightarrow 0$. Thus, for any $\kappa$ we can find $\e_0=\e_0(\kappa,d)$ such that $p_{\e}(\kappa)\leq p_0<p_c(d)$ and $\mathcal{I}_{s_\e}(Q)\setminus \mathcal{I}_{s_\e}^{\kappa}(Q)$ is almost surely path connected for all $\e\leq \e_0$. In particular
    \[
    \mathbb{P}_{p_{\e}(\kappa)}(\#(U(0) )\geq N)\leq    \mathbb{P}_{p_{0}}(\#(U(0) )\geq N)\leq e^{-\psi_{p_0}N}.
    \]
for some $\psi_{p_0}$. Fix $\Lambda'>d$ and consider the subsequence $\{\e_{n}\}_{n\in\N}$ such that
\[
s_{\e_{n}}^{\Lambda'-d}\leq \frac{1}{n^2}.
\]
Setting $s_n:=s_{\e_{n}}$, let us then consider the events
    \[
    A_{n}:=\left\{\omega \in \Omega  \ : \ \text{exists $J\in \mathcal{I}_{s_n}(Q)$} \ \text{s.t.} \ \#(U(J))\geq \frac{\Lambda'}{\psi_{p_0}} \log(s_{n}^{-1}) \right\}.
    \]
Then  
    \begin{align*}
        \mathbb{P}(A_n)&\leq \mathbb{P}\left(\bigcup_{J\in \mathcal{I}_{s_n}(Q)} \left\{\omega \in \Omega \ : \ \#(U(J))\geq \frac{\Lambda'}{\psi_{p_0}} \log(s_{n}^{-1})\right\} \right)\\
        &\leq \sum_{J\in \mathcal{I}_{s_n}(Q)}  \mathbb{P}\left(  \left\{\omega \in \Omega \ : \ \#(U(J))\geq \frac{\Lambda'}{\psi_{p_0}} \log(s_{n}^{-1})\right\} \right)\\
        &=\sum_{J\in \mathcal{I}_{s_n}(Q)}    \mathbb{P}_{p_{\e_n}(\kappa)}\left(\#(U(J))\geq\frac{\Lambda'}{\psi_{p_0}} \log(s_{n}^{-1})\right)\\
        &\leq s_n^{\Lambda'} \#(\mathcal{I}_{s_n}(Q))\leq C s_n^{\Lambda'-d}\leq C n^{-2}.
    \end{align*} 
By invoking Borel-Cantelli Lemma \ref{lem:bor} we conclude that the probability that $A_n$ occurs infinitely many time is $0$. Thus, naming $\Lambda:=\frac{\Lambda'}{\psi_{p_0}} $ we conclude.
\end{proof}

We will also make use of the following Proposition, providing bounds on the cardinality of $\mathcal{I}_{s_{\e}}^{\kappa}(Q)$ (up to select a subsequence $\{\e_n\}_{n\in \N}$).
\begin{proposition}\label{prop:decay}
Let $s_{\e}$ be a sequence satisfying
\begin{equation}\label{eqn:regime2}
    \liminf_{\e \rightarrow 0}\frac{s_{\e}}{\e\log(\e^{-d})^{\sfrac{1}{d}}}=\beta>0.
\end{equation}
Then the following hold: 
\begin{itemize}
    \item [a)] If $\beta>\frac{1}{\gamma^{1/d}}$, there exists $0<\kappa_0<1$ and a subsequence $\{\e_n\}_{n\in \N}$ such that, setting $s_n=s_{\e_n}$, for all $\kappa_0 \geq \kappa<1$ and almost surely we can find $n_0$ for which
    \[
    \#(\mathcal{I}_{s_{_n}}^{\kappa}(Q))=0 \ \ \ \text{for all $n\geq n_0$};
    \]
    \item[b)] If $0<\beta\leq \frac{1}{\gamma^{1/d}}$ there exist two constants $\kappa_0>0$, $\varrho_0>0$ depending on $\beta,d,\gamma$ only and a subsequence $\{\e_n\}_{n\in \N}$ such that, setting $s_n:=s_{\e_n}$, for all $\kappa_0\leq \kappa< 1$ and almost surely we can find $n_0$ for which 
    \[
    \#(\mathcal{I}_{s_n}^{\kappa}(Q))\leq \frac{\e_n^{\varrho_0 }}{s_n^d}  \ \ \text{for all $n\geq n_0$.}
    \]
\end{itemize}
\end{proposition}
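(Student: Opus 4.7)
The plan is to estimate, for a single $J \in \mathcal{I}_{s_\e}(Q)$, the probability $p_\e(\kappa)$ that $J$ is bad, and then pass from an expected-value bound to an almost sure bound by combining Markov's inequality with the Borel-Cantelli Lemma \ref{lem:bor} along a geometric subsequence. Since $\eta_\e(Q_{s_\e}(J))$ is Poisson with mean $\gamma s_\e^d/\e^d$, Lemma \ref{lemma:CCconcc} applied with $t = \kappa\gamma s_\e^d/\e^d$ gives
\[
p_\e(\kappa) \leq \exp\!\left(-\frac{2\kappa^2 \gamma}{1+\kappa}\cdot\frac{s_\e^d}{\e^d}\right).
\]
Feeding in the lower bound $s_\e^d/\e^d \geq d\beta^d(1-\delta_\e)^d \log(\e^{-1})$ extracted from \eqref{eqn:regime2} (with $\delta_\e\to 0$) rewrites this as $p_\e(\kappa) \leq \e^{A(\kappa)(1-\delta_\e)^d}$, with $A(\kappa) := 2d\kappa^2\gamma\beta^d/(1+\kappa)$. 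Together with $\#(\mathcal{I}_{s_\e}(Q)) \leq C s_\e^{-d} \leq C\e^{-d}$ this yields the master estimate $\mathbb{E}[\#(\mathcal{I}_{s_\e}^\kappa(Q))] \leq C s_\e^{-d}\, \e^{A(\kappa)(1-\delta_\e)^d}$.

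For case (a), since $\gamma\beta^d > 1$, $A(\kappa)/d \to \gamma\beta^d > 1$ as $\kappa \nearrow 1$, and monotonicity of $\kappa\mapsto 2\kappa^2/(1+\kappa)$ lets me pick $\kappa_0 < 1$ with $A(\kappa) > d$ for all $\kappa \in [\kappa_0,1)$. The master estimate combined with $s_\e^{-d} \leq C\e^{-d}$ and Markov gives $\mathbb{P}(\#(\mathcal{I}_{s_\e}^\kappa(Q)) \geq 1) \leq C\,\e^{A(\kappa)(1-\delta_\e)^d - d}$, which is $\e^{\tau}$ for some $\tau > 0$ once $\e$ is small. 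Along a geometric subsequence $\e_n = \lambda^n$ the sum of these probabilities converges and Borel-Cantelli gives $\#(\mathcal{I}_{s_n}^\kappa(Q)) = 0$ eventually, almost surely.

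For case (b), $\gamma\beta^d \leq 1$ prevents $A(\kappa)$ from ever surpassing $d$, so the bad set cannot be emptied. Instead I would fix any $\kappa_0 \in (0,1)$ and set $\varrho_0 := A(\kappa_0)/2 > 0$; both constants then depend only on $\beta,d,\gamma$. Markov's inequality this time is applied at the threshold $\e^{\varrho_0}/s_\e^d$:
\[
\mathbb{P}\!\left(\#(\mathcal{I}_{s_\e}^\kappa(Q)) > \frac{\e^{\varrho_0}}{s_\e^d}\right) \leq \frac{s_\e^d}{\e^{\varrho_0}}\,\mathbb{E}[\#(\mathcal{I}_{s_\e}^\kappa(Q))] \leq C\,\e^{A(\kappa)(1-\delta_\e)^d - \varrho_0}.
\]
For $\kappa\geq\kappa_0$ and $\e$ small the exponent is uniformly $\geq \varrho_0/2$, and a second application of Borel-Cantelli along a geometric subsequence closes the argument.

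The conceptual step is the single-index Cramer-Chernoff bound; the rest is bookkeeping. The delicate point is to choose thresholds $\kappa_0,\varrho_0$ depending only on $(\beta,d,\gamma)$ and uniform in $\kappa\in[\kappa_0,1)$, while ensuring the correction $(1-\delta_\e)^d$ produced by \eqref{eqn:regime2} does not erode the strict inequalities $A(\kappa) > d$ in (a) and $A(\kappa) > \varrho_0$ in (b). Case (a) is the borderline one: the exponent $A(\kappa)-d$ can be made positive but not arbitrarily large, so $\kappa_0$ must be pushed close enough to $1$ to exploit the full excess $\gamma\beta^d - 1$.
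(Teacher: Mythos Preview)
Your proof is correct and follows essentially the same route as the paper: the single-box Cram\'er--Chernoff bound from Lemma~\ref{lemma:CCconcc}, the expected-value estimate $\mathbb{E}[\#(\mathcal{I}_{s_\e}^\kappa(Q))]\leq C s_\e^{-d}\,\e^{A(\kappa)(1-\delta_\e)^d}$, Markov's inequality at the appropriate threshold, and Borel--Cantelli along a subsequence. The only cosmetic differences are that the paper absorbs the liminf by first passing to a subsequence with $s_n\geq (\beta-1/n)\sigma_n$ (rather than your $\delta_\e\to 0$), and it selects the Borel--Cantelli subsequence by requiring $\e_{n_j}^{\tau}\leq j^{-2}$ rather than taking a geometric one; both choices achieve summability in the same way.
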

% \begin{remark}
% The above Lemma will play a crucial role in deducing $L^1$ convergence of the sets $E_{s_n}^{\kappa}(\omega)$ to a set $|E_{\kappa}|$ of measure $0$. 
% \end{remark}
\begin{proof}
Set $\sigma_{\e}:=\e \log(\e^{-d})^{\sfrac{1}{d}}$. We can select a subsequence such that, setting $\beta_n=\beta-\frac{1}{n}$, $\sigma_n:=\sigma_{\e_n}$, we have
        \[
        s_{n}\geq \beta_n \sigma_{n}.
        \]
Notice that (cf. Remark \ref{rmk:exp})
    \[
   \mathbb{E}[\eta_{\e}(Q_{s}(J))]=\gamma\frac{s^d}{\e^d}.
    \]
In particular, Lemma \ref{lemma:CCconcc} yields
    \begin{align*}
        p_{n}&:= \mathbb{P}\left(\left|\eta_{\e_n}(Q_{s_{n}}(J))-\mathbb{E}[\eta_{\e_n}(Q_{s_n}(J))]\right|>\kappa\gamma\frac{s_{n}^d}{\e_n^d}\right)\leq \exp\left(-\frac{2\kappa^2\gamma}{(1+\kappa)}\frac{s_{n}^d}{\e_n^d}\right)\\
        &\leq \exp\left(-\frac{2\kappa^2\gamma}{(1+\kappa)}\log\e_n^{-d \beta_n^d}\right) = \e_n^{\beta_n^d d  \gamma \frac{2\kappa^2}{(1+\kappa)}}.
    \end{align*}
Set,  for $J\in\mathcal{I}_{s_n}(Q) $ 
    \begin{equation}
    X_J^{\kappa,n} :=\left\{\begin{array}{lr}
        1 & \text{if $J\in \mathcal{I}_{s_n}^{\kappa}(Q)$} \\
        0 & \text{otherwise}.
    \end{array} \right.    
    \end{equation}
Notice that each $X_J^{\kappa,n}$ is a Bernoulli random variable attaining value $1$ with probability $p_n$ and $0$ with probability $1-p_n$ (Here we are omitting the dependence of $p_n$ on $\kappa$ to lighten the notation). Setting then
\[
T_n^{\kappa} :=\sum_{J\in \mathcal{I}_{s_n}(Q)}X_J^{\kappa,n}=\#(\mathcal{I}_{s_n}^{\kappa}(Q))
\]
we have that
    \[
    \mathbb{E}[T_n^{\kappa}]=\#( \mathcal{I}_{s_n}(Q)) p_n\leq C \frac{\e_n^{\beta_n^d d  \gamma \frac{2\kappa^2}{(1+\kappa)}}}{s_n^d}
    \]
for a constant $C$ depending on the dimension only (that in the sequel may vary from line to line). 
% Notice now that
%     \[
%   \mathbb{E}[T_n^{\kappa} ]\leq  C \frac{ \e_n^{\beta_n^d d  \gamma \frac{2\kappa^2}{(1+\kappa)}-d} }{\log(\e_n^{-1})}.
%     \]
 Setting 
    \[
   \varrho_n(\kappa):= \beta_n^d d   \gamma \frac{2\kappa^2}{(1+\kappa)} 
    \]
 the previous estimate writes 
\[
 \mathbb{E}[T_n^{\kappa}]\leq  C \frac{ \e_n^{ \varrho_n(\kappa)} }{s_n^d}.
 \]
 
\textbf{Proof of assertion a).} If $\beta^d> \frac{1}{\gamma} $ then, for some $n_0$, $\beta_n^d> \frac{1}{\gamma}$ for $n\geq n_0$. Then, for some $0 < \kappa_0 < 1$ we have $\varrho_n(\kappa_0)>d$ whenever  $n\geq n_0$. Also recalling that, from the very definition of $\sigma_{\e}$, it holds that $\left(\frac{s_n}{\e_n}\right)^d\geq \frac{1}{d} \frac{\beta_n^d}{\log(\e_n^{-1})}$ we have
 
\[
    \mathbb{E}[T_n^{\kappa_0}] \leq C \frac{\e_n^{\varrho_n(\kappa_0)-d}}{\log(\e_n^{-1})}.
    \]
Consider the subsequence $\{\e_{n_j}\}_{j\in \N}$ such that
\[
\e_{n_j}^{(\varrho_j(\kappa_0)-d)/2}\leq \frac{1}{j^2}
\]
where we abbreviate $ \varrho_j:=\varrho_{n_j} $ for the sake of clarity. Define the events
\[
A_j(\kappa_0):=\left\{\omega\in \Omega \ :\ T_{n_j}^{\kappa_0}(\omega) \geq\frac{ \e_{n_j}^{(\varrho_j(\kappa_0)-d)/2}}{\log(\e_{n_j}^{-1})}\right\}.
\]
Then Chebishev inequality (cf. Remark \ref{rmk:cheb}) yields
    \begin{align*}
        \mathbb{P}(A_j(\kappa_0))\leq \frac{\mathbb{E}[T_{n_j}^{\kappa_0}] \log(\e_{n_j}^{-1})}{\e_{n_j}^{(\varrho_j(\kappa_0)-d)/2}}\leq C\e_{n_j}^{(\varrho_j(\kappa_0)-d)/2}\leq C \frac{1}{j^2}.
    \end{align*}
    In particular
        \[
        \sum_{j\in \N} \mathbb{P}(A_j(\kappa_0))<+\infty.
        \]
    Borel-Cantelli Lemma \ref{lem:bor} now implies that
        \[
        \mathbb{P}\left(\bigcap_{j\in \N}\bigcup_{m\geq j} A_m(\kappa_0)\right)=0.
        \]
Set $\Omega':=\Omega \setminus \bigcap_{j\in \N}\bigcup_{m\geq j} A_m(\kappa_0)$. We now observe that if $\omega \in \Omega'$ then there exists $j_0$ such that 
\[
 \#(\mathcal{I}_{s_{n_j}}^{\kappa_0}(Q))< \frac{ \e_{n_j}^{(\varrho_j(\kappa_0)-d)/2}}{\log(\e_{n_j}^{-1})}\ \ \ \text{for all $j\geq j_0$}.
\]
Since $\varrho_j(\kappa_0)-d>0$ we conclude that 
\[
 \#(\mathcal{I}_{s_{n_j}}^{\kappa_0}(Q)) =0 \ \ \text{for all $j$ big enough}.
\]
Since, for $\kappa>\kappa_0$ we have $\mathcal{I}_{s_{n_j}}^{\kappa}(Q)\subset \mathcal{I}_{s_{n_j}}^{\kappa_0}(Q) $ we conclude the proof of assertion a). \\

\textbf{Proof of assertion b).} If $0<\beta^d\leq \frac{1}{\gamma}$ we have (for all $n$ big enough)
\[
d>\varrho_n(\kappa)>\left(\frac{\beta}{2}\right)^d d   \gamma \frac{2\kappa^2}{(1+\kappa)}=\varrho(\kappa)>0 
\]
for all $\kappa\in(0,1)$ and then 
\[
    \mathbb{E}[T_n^{\kappa}]\leq C \frac{\e_n^{\varrho(\kappa)}}{s_n^d}.
    \]
Fix $\kappa_0\in (0,1)$ and consider the subsequence $\{\e_{n_j}\}_{j\in \N}$ such that
\[
\e_{n_j}^{\varrho(\kappa_0)/2}\leq \frac{1}{j^2}.
\]
We define the events
\[
A_j(\kappa_0):=\left\{\omega\in \Omega \ :\ T_{n_j}^{\kappa_0}(\omega) \geq\frac{ \e_{n_j}^{\varrho(\kappa_0)/2}}{s_j^d}\right\}.
\]
Then Chebishev inequality (cf. Remark \ref{rmk:cheb}) again yields
    \begin{align*}
        \mathbb{P}(A_j(\kappa_0))\leq \frac{\mathbb{E}[T_{n_j}^{\kappa_0}] s_j^d}{\e_{n_j}^{\varrho(\kappa_0)/2}}\leq C\e_{n_j}^{\varrho(\kappa_0)/2}\leq \frac{1}{j^2}.
    \end{align*}
    In particular
        \[
        \sum_{j\in \N} \mathbb{P}(A_j(\kappa_0))<+\infty.
        \]
By invoking again Borel-Cantelli Lemma \ref{lem:bor} we achieve
        \[
        \mathbb{P}\left(\bigcap_{j\in \N}\bigcup_{m\geq j} A_m(\kappa_0)\right)=0.
        \]
Set $\Omega'':=\Omega \setminus \bigcap_{j\in \N}\bigcup_{m\geq j} A_m(\kappa_0)$. We now observe that if $\omega \in \Omega''$ then there exists $j_0$ such that 
\[
 \#(\mathcal{I}_{s_{n_j}}^{\kappa_0,\omega}(Q))\leq \frac{ \e_{n_j}^{\varrho(\kappa_0)/2}}{s_n^d}\ \ \ \text{for all $j\geq j_0$}.
\]
Since, for $\kappa>\kappa_0$, we have $\mathcal{I}_{s_{n_j}}^{\kappa}(Q)\subset \mathcal{I}_{s_{n_j}}^{\kappa_0}(Q) $ we conclude by setting $\varrho_0:=\varrho(\kappa_0)/2$.
\end{proof}

\section{Proof of Compactness Theorem \ref{MainThm:CMP}}\label{sct:proof}

\begin{figure}
    \centering
    \includegraphics[scale=0.6]{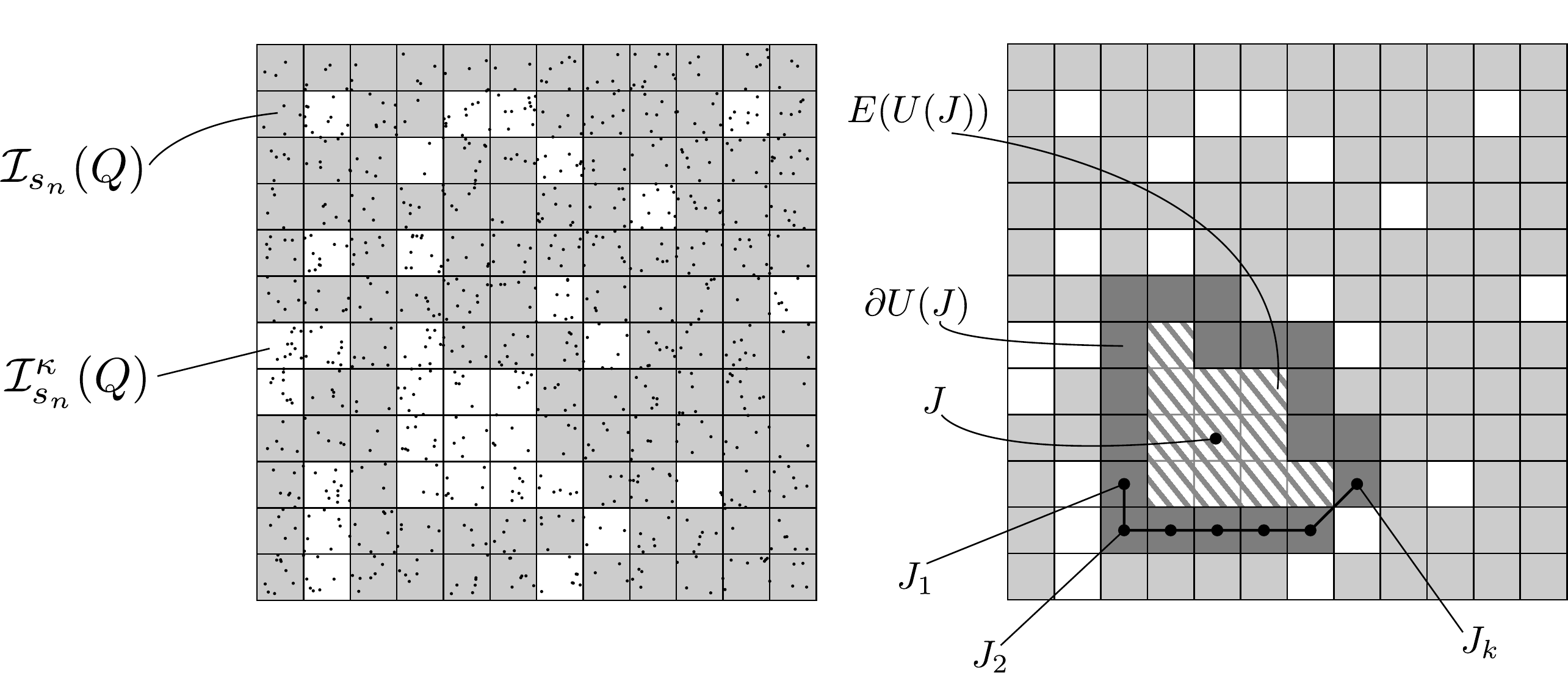}
    \caption{On the left: The percolation properties of $\mathcal{I}_{s_n}^{\kappa}(Q)$ stated in  Lemma \ref{lem:DimConn} and Proposition \ref{prop:decay}. On the right: a depiction of a diagonal path $\mathbf{t}_{\partial U}$, a connected component $U=U(J)$ of $\mathcal{I}_{s_n}^{\kappa}(Q)$ and the boundary $\partial U(J)$. }
    \label{fig:csi}
\end{figure}
Let us introduce some notation needed in the proof of the extension Lemma \ref{lem:extension}. Given $U\subset \mathcal{J}$ a connected component of a subfamily of indexes $\mathcal{J}\subset \mathcal{I}_{s_\e}(Q)$ we denote 
\[
\rc{\partial U:=\left\{J\in \mathcal{I}_{s_\e}(Q)\setminus \mathcal{J}\ | \ \exists  \ J'\in  U  \ : \partial Q_{s_{\e}}(J')\cap \partial Q_{s_{\e}}(J)\neq \emptyset \right\}.}
\]
Notice that we allow also paths that can walk in diagonal, since our energy allows us to control also the diagonal direction. In particular, for $J\in \mathcal{I}_{s_\e}(Q)$ we define the set of \textit{neighbors} of $J$ as
    \[
    N_{\e}(J):=\{J'\in (Q_{3s_\e}(J)\setminus\{J\})\cap \mathcal{I}_{s_\e}(Q)\}.
    \]
\begin{definition}[diagonal paths]
We say that $\mathbf{t}(J,J')=\{J_1,\ldots,J_k\}\subset \mathcal{I}_{s_\e}(Q)$ is a \textit{diagonal path} connecting $J$ to $J'$ if $J_1=J$, $J_k=J'$ and $J_{i+1} \in N_\e(J_i)$ for all $i=1,\ldots, k-1$.
\end{definition}
Motivated by Lemma \ref{lem:DimConn} and Proposition \ref{prop:decay}, we introduce the notion of \textit{controlled subfamily of indexes $\{\mathcal{J}_n \subset \mathcal{I}_{s_n}(Q)\}_{n\in \N}$}.
\begin{definition}\label{def:contrsub}
Let $\{s_n\}_{n\in \N}$ be a vanishing sequence. We say that $\{\mathcal{J}_n \subset \mathcal{I}_{s_n}(Q)\}_{n\in \N}$ is a \textit{controlled subfamily of indexes} if there exists $n_0\in \N$, $\varrho_0\in (0,1)$, $\Lambda\in \R_+$ universal constants such that the following are in force for all $n\geq n_0$:
\begin{itemize}
    \item[a)] $\mathcal{I}_{s_n}(Q)\setminus \mathcal{J}_n$ is path connected in the sense of Definition \ref{def:pathconn};
    % \item[b)] If $U,U'\subset \mathcal{J}_n$ are two connected components then 
    % \[
    % \dist(U,U'):=\inf\{|J-J'|\ : \ J\in U, J'\in U'\}>2s_n.
    % \]
      \item[b)] there exists $0<\varrho_0 <1$ such that $\#(\mathcal{J}_n)s_n^d \leq s_n^{\varrho_0}$;
     \item[c)] for each connected component $U\subset \mathcal{J}_n$ it holds $ \#(U)\leq \Lambda \log(s_n^{-1})$.
\end{itemize}
\end{definition}
\begin{remark}
Up to extract a subsequence, by combining Lemma \ref{lem:DimConn} and Proposition \ref{prop:decay} we can ensure that (for a suitable $\kappa$) the family $\mathcal{J}_n=\mathcal{I}_{s_n}^\kappa(Q)$ is a \textit{controlled subfamily of index} (we refer the reader to Figure \ref{fig:csi}).
\end{remark}
\begin{lemma}\label{lem:controlledpaths}
Let $\{\mathcal{J}_n\}_{n\in \N}$ be a controlled subfamily of indexes. There exists a universal constant $C$ such that for any $U\subset \mathcal{J}_n$ connected component of $\mathcal{J}_n$, and for any $J,J'\in \partial U$, we can find a diagonal path $\mathbf{t}_{\partial U}(J,J')\subset \partial U$ such that
\begin{equation}\label{eqn:lengthpath}
\#(\mathbf{t}_{\partial U}(J,J'))\leq C \#(\partial U).
\end{equation}
\end{lemma}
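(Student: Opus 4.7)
The plan splits the work into two parts: first establish that $\partial U$ is diagonally path-connected; second, obtain the length bound by selecting a shortest diagonal path in $\partial U$. Once diagonal connectivity is known, any shortest diagonal path $\mathbf{t}_{\partial U}(J,J')\subset\partial U$ is simple (it cannot revisit a cell, or one could shortcut it), so its length is bounded by the cardinality of the diagonal connected component of $\partial U$ containing $J,J'$, and a fortiori by $\#(\partial U)$. This delivers \eqref{eqn:lengthpath} with $C=1$; a larger universal constant simply leaves room for a constructive, non-minimal bridging path if one prefers to avoid an existential shortest-path argument.

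The substantive content is therefore the diagonal connectivity of $\partial U$. Given $J,J'\in\partial U$, pick $K,K'\in U$ with $Q_{s_n}(K)$ touching $Q_{s_n}(J)$ and $Q_{s_n}(K')$ touching $Q_{s_n}(J')$, which exist by the very definition of $\partial U$. Since $U$ is a connected component of $\mathcal{J}_n$ in the sense of Definition \ref{def:pathconn}, there is a face-path $K=A_0,A_1,\ldots,A_m=K'$ inside $U$. The idea is then to lift this face-path inside $U$ to a diagonal path inside $\partial U$: one inductively produces cells $L_0=J,L_1,\ldots,L_m=J'$ with $L_i\in\partial U$ diagonally adjacent to $A_i$, together with a short diagonal path inside $\partial U$ joining $L_i$ to $L_{i+1}$. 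Concatenating these local bridges gives the sought diagonal path from $J$ to $J'$ in $\partial U$.

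The heart of the proof, and the expected main obstacle, is the local inductive step: for face-adjacent $A,B\in U$ and $L\in\partial U$ diagonally adjacent to $A$, produce $L'\in\partial U$ diagonally adjacent to $B$ and a short diagonal path joining $L$ to $L'$ inside $\partial U$. The cells that are diagonally adjacent to the union $Q_{s_n}(A)\cup Q_{s_n}(B)$ form a fixed ``shell'' around this brick, and the cells in this shell that lie in $\mathcal{I}_{s_n}(Q)\setminus\mathcal{J}_n$ are automatically in $\partial U$. Here assumption (a) of Definition \ref{def:contrsub} is essential: face-connectedness of $\mathcal{I}_{s_n}(Q)\setminus\mathcal{J}_n$ forbids a $\mathcal{J}_n$-wall from cutting the shell into two pieces that cannot be bridged within $\partial U$, since such a wall would locally disconnect the complement. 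A finite case analysis on the local configuration of $\mathcal{J}_n$-cells around the brick then yields the desired constant-length bridge. Once this local step is secured, induction on $m$ gives diagonal connectivity of $\partial U$, and the shortest-path remark of the first paragraph completes the proof.
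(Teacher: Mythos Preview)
Your reduction in the first paragraph is sound: once $\partial U$ is known to be diagonally connected, a shortest diagonal path is simple and hence has length at most $\#(\partial U)$, giving \eqref{eqn:lengthpath} with $C=1$. The content is therefore the connectivity of $\partial U$, and here your lifting argument has a real gap. Your inductive step asks, for each $A_i$ on the face-path in $U$, for a cell $L_i\in\partial U$ diagonally adjacent to $A_i$; but such a cell need not exist. Already in $d=2$, if $U$ is the $3\times3$ block $\{-1,0,1\}^2$ and the face-path passes through the centre $(0,0)$, every diagonal neighbour of $(0,0)$ lies in $U$, so no admissible $L_1$ is available and the induction stalls. Your appeal to property~(a) does not rescue the local step either: face-connectedness of $\mathcal{I}_{s_n}(Q)\setminus\mathcal{J}_n$ is a global statement, and a wall of $U$-cells can separate the shell around $A\cup B$ while the complement remains globally connected through cells far from $A,B$. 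The ``finite case analysis'' thus cannot be confined to the shell, and once you leave the shell you are assuming the very connectivity you set out to prove.

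The paper bypasses any local analysis with a short topological argument that uses property~(a) in its natural global form. Setting $E(U):=\bigcup_{J\in U}Q_{s_n}(J)$, one notes that $E(U)$ is connected and simply connected (properties (a)--(c) force the complement of $E(U)$ to be connected), so that $\partial E(U)$ is connected. Pick $x_J\in\partial Q_{s_n}(J)\cap\partial E(U)$ and $y_{J'}\in\partial Q_{s_n}(J')\cap\partial E(U)$ and join them by an arc $\tau\subset\partial E(U)$. The non-$U$ cubes that $\tau$ successively visits lie in $\partial U$, and consecutive ones share boundary, hence are diagonal neighbours; after removing repetitions this is the desired diagonal path in $\partial U$, and the length bound is immediate.
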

\begin{proof}
Observe that the set 
    \[
    E(U):=\bigcup_{J\in U} Q_{s_n}(J)
    \]
is connected and simply connected due to properties a), b), c) of Definition \ref{def:contrsub}. In particular the set $\partial E(U)$ is connected and thus connected by arc. Fix $J,J'\in \partial U$ and pick 
    \[
    x_J\in \partial Q_{s_n}(J)\cap \partial E(U), \ \ \  y_{J'}\in \partial Q_{s_n}(J')\cap \partial E(U).
    \]
Consider then an arc $\tau:[0,1]\rightarrow \partial E(U)$ such that $\tau(0)=x_J$, $\tau(1)=y_J'$. Since $\tau\in \partial E(U)$ we can find $J_1,\ldots,J_k \in \partial U$, $t_1,\ldots, t_k\in (0,1)$ such that $J_1=J$, $J_k=J'$ and $\tau(t)\in Q_{s_n}(J_i)$ for $t\in(t_i,t_{i+1})$. By eventually reducing loops and repetition, without loss of generality we can assume that $\tau$ is a simple arc and do not passes through each $J_i$ more than once. Now by defining $\mathbf{t}_{\partial U}(J,J'):=\{J_1,\ldots,J_k\}$, by construction we have that $\mathbf{t}_{\partial U}$ is a diagonal path. The bound \eqref{eqn:lengthpath} is also immediate. 
\end{proof}
\begin{remark}\label{rmk:connect}
Notice that, if property a) of Definition \ref{def:contrsub} fails then Lemma \ref{lem:controlledpaths} is, in general, false.
\end{remark}

\begin{lemma}[An extension Lemma]\label{lem:extension}
Let $\{s_n\}_{n\in\N}$ be a vanishing sequence. Let $\{\mathcal{J}_n\subset \mathcal{I}_{s_n}(Q)\}_{n\in\N}$ be a controlled subfamily of indexes in the sense of Definition \ref{def:contrsub}. Then, for any sequence  of functions  $\{u_{n}: \mathcal{I}_{s_{n}}(Q)\setminus \mathcal{J}_{n} \rightarrow \R^+\}_{n\in\N}$ such that
    \begin{equation}\label{eqn:bndLimit}
    \sup_{n\in \N}\left\{\sum_{\substack{J,J' \in \mathcal{I}_{s_{n}}(Q)\setminus \mathcal{J}_{n} \\ J' \in N_{\e_n}(J)}}|u_{n}(J)-u_{n}(J')|^p s_{n}^{d-p} + \sum_{\substack{J\in\mathcal{I}_{s_{n}}(Q)\setminus \mathcal{J}_n}} |u_n(J)|^ps_{n}^{d}  \right\}<+\infty
    \end{equation}
 we can find a sequence of extended functions $\{Tu_n: \mathcal{I}_{s_n}(Q) \rightarrow \R^+\}_{n\in\N}$ such that
\begin{align*}
    Tu_n&=u_n\ \ \ \text{on $\mathcal{I}_{s_n}(Q)\setminus \mathcal{J}_{n} $ }
\end{align*}
and for all $q<p$
  \begin{equation}\label{eqn:bndLimit2}
        \sup_{n\in \N} \left\{\sum_{\substack{J,J'\in \mathcal{I}_{s_n}(Q) \\ |J-J'|=s_n}} |Tu_n(J)-Tu_n(J')|^q s_n^{d-q} + \sum_{\substack{J\in\mathcal{I}_{s_n}(Q)}} |T u_n(J)|^q s_n^{d}  \right\}  <+\infty.
    \end{equation}
\end{lemma}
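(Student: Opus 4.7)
The plan is to define an extension that is constant on each connected component $U$ of $\mathcal{J}_n$, taking as value the arithmetic mean
\[
c_U := \frac{1}{\#(\partial U)} \sum_{K \in \partial U} u_n(K),
\]
while keeping $Tu_n = u_n$ outside $\mathcal{J}_n$. With this choice, nearest-neighbor finite differences split naturally into three cases. First, pairs with both endpoints in $\mathcal{I}_{s_n}(Q)\setminus \mathcal{J}_n$: since nearest neighbors are in particular in $N_{\e_n}$, the corresponding contributions are controlled by the diagonal energy in \eqref{eqn:bndLimit} via Hölder. Second, pairs with both endpoints in $\mathcal{J}_n$: two such nearest-neighbor indices necessarily belong to the same connected component of $\mathcal{J}_n$, so $Tu_n$ is constant on them and the contribution vanishes. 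The only nontrivial case is that of boundary pairs $(J, K)$ with $J \in U$ and $K \in \partial U$.

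Before attacking the boundary differences, I dispose of the $\ell^q$ part of \eqref{eqn:bndLimit2}. Outside $\mathcal{J}_n$, $Tu_n = u_n$ and the bound follows from the $\ell^p$ bound in \eqref{eqn:bndLimit} via Hölder with exponents $p/q$ and $p/(p-q)$, using $\#(\mathcal{I}_{s_n}(Q)) s_n^d \le C$. Inside $\mathcal{J}_n$, Jensen applied to $|c_U|^q$ reduces the bound to a weighted sum of $|u_n(K)|^q$ over $K \in \bigcup_U \partial U$; a further Hölder argument on this restricted sum, exploiting $\#(\bigcup_U \partial U)\, s_n^d \le C \#(\mathcal{J}_n) s_n^d \le C s_n^{\varrho_0}$ from property b) of Definition \ref{def:contrsub}, makes the $\mathcal{J}_n$ contribution vanish as $n \to \infty$.

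The core estimate is for the across-boundary finite differences. Jensen gives
\[
|c_U - u_n(K)|^q \le \frac{1}{\#(\partial U)} \sum_{K' \in \partial U} |u_n(K)-u_n(K')|^q,
\]
and I invoke Lemma \ref{lem:controlledpaths} to connect $K$ and $K'$ in $\partial U$ by a diagonal path $\mathbf{t}_{\partial U}(K, K') \subset \partial U$ of length at most $C\,\#(\partial U)$, all of whose edges are diagonal pairs in $\mathcal{I}_{s_n}(Q)\setminus \mathcal{J}_n$. Discrete Hölder then yields $|u_n(K)-u_n(K')|^q \le (C\#(\partial U))^{q-1} \sum_{(I,I') \in \mathbf{t}_{\partial U}(K,K')} |u_n(I)-u_n(I')|^q$. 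Summing over the relevant boundary pairs, exchanging the order of summation, and using $\#(\partial U) \le C\,\#(U) \le C\,\Lambda \log(s_n^{-1})$ from property c) of Definition \ref{def:contrsub}, the total boundary contribution to \eqref{eqn:bndLimit2} is bounded by
\[
C \log^q(s_n^{-1}) \sum_{(I,I') \in \mathcal{E}_n} |u_n(I)-u_n(I')|^q s_n^{d-q},
\]
where $\mathcal{E}_n$ collects the diagonal edges lying in $\bigcup_U \partial U$.

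The final step, where the assumption $q<p$ is decisive, absorbs the logarithmic blow-up via the polynomial decay from property b). Hölder with exponents $p/q$ and $p/(p-q)$ on the last sum gives
\[
\sum_{(I,I')\in \mathcal{E}_n} |u_n(I)-u_n(I')|^q s_n^{d-q} \le \Big(\sum_{(I,I')\in \mathcal{E}_n} |u_n(I)-u_n(I')|^p s_n^{d-p}\Big)^{q/p} \big(\#(\mathcal{E}_n)\, s_n^d\big)^{(p-q)/p},
\]
whose first factor is uniformly bounded by \eqref{eqn:bndLimit}, while $\#(\mathcal{E}_n)\, s_n^d \le C\,\#(\mathcal{J}_n) s_n^d \le C s_n^{\varrho_0}$ by property b). Altogether the total boundary contribution is at most $C\log^q(s_n^{-1}) s_n^{\varrho_0 (p-q)/p}$, which actually vanishes as $n\to\infty$. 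The main obstacle is precisely this balancing act: the logarithmic factor generated by property c) (the size of the largest bad component) can only be defeated by the polynomial decay from property b), and this cancellation strictly requires $q<p$ to keep the Hölder exponents non-degenerate; for $q=p$ the argument breaks down.
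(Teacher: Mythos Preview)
Your proof is correct and takes the same route as the paper: the identical extension by boundary averages $c_U$, the same reliance on the diagonal paths of Lemma~\ref{lem:controlledpaths}, and the same balancing of the logarithmic loss from property~(c) against the polynomial gain $s_n^{\varrho_0}$ from property~(b) via H\"older with $q<p$, arriving at the same final estimate $C\log^q(s_n^{-1})\,s_n^{\varrho_0(p-q)/p}\to 0$. The only point left implicit in your sketch is the weight $\#(U)/\#(\partial U)$ appearing in the $\ell^q$ estimate on $\mathcal{J}_n$; the paper controls it via the isoperimetric inequality, but the cruder bound $\#(U)/\#(\partial U)\le \#(U)\le\Lambda\log(s_n^{-1})$ from property~(c) works equally well and is absorbed by the same H\"older step.
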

\begin{remark}
Notice that we assume a bound \eqref{eqn:bndLimit} involving interaction also in diagonal directions and we conclude a control on the energy of $Tu_n$ \eqref{eqn:bndLimit2} in terms of nearest neighbors. Bound \eqref{eqn:bndLimit2} could, in principle, be improved to a bound involving the full family of neighbors $N_{\e_n}(J)$ (consistently with \eqref{eqn:bndLimit}) by carrying a careful analysis of the diagonal interactions of $Tu_n$'s. However, as clarified by Lemma \ref{lem:alici}, bound \eqref{eqn:bndLimit2} is already strong enough to conclude  compactness for the sequence $Tu_n$.
\end{remark}
\begin{proof}
We set, for $U$ a connected component of $\mathcal{J}_n$:
\begin{align*}
    (u_n)_U&:=\frac{1}{\#( \partial U)}\sum_{J\in  \partial U} u_n(J).
\end{align*}
We then set
    \begin{equation}
    Tu_n(J)=\left\{ \begin{array}{ll}
     u_n(J) & \ \ \text{if $J\in\mathcal{I}_{s_n}(Q)\setminus \mathcal{J}_{n}$},\\
    (u_n)_U \ & \ \text{if $J\in U$, for some $U$ connected component of  $ \mathcal{J}_n$}.
    \end{array}\right.
    \end{equation}
We now estimate the energy of $Tu_n$.
\begin{align}
  \sum_{\substack{J,J'\in \mathcal{I}_{s_n}(Q) \\ |J-J'|=s_n}} |Tu_{n}(J)-Tu_{n}(J')|^q s_n^{d-q}  \leq & C  \sum_{\substack{J,J'\in \mathcal{I}_{s_n}(Q)\setminus \mathcal{J}_n \\ |J-J'|=s_n}}  |u_n(J)-u_n(J')|^q s_n^{d-q} \nonumber \\ 
    &+C\sum_{\substack{U\subset \mathcal{J}_n \\ \text{connected}\\\text{component}} }\sum_{\substack{J\in\partial  U}} |u_n(J)-(u_n)_U|^q s_n^{d-q}. \label{eqn:estimateEnr}
\end{align}
Let us treat the second sum separately. Fix $U\subset \mathcal{J}_n $ a connected component and observe, by considering the paths given by Lemma \ref{lem:controlledpaths}, that
\begin{align*}
    \sum_{\substack{J\in\partial  U}}& |u_n(J)-(u_n)_U|^q s_n^{d-q} \leq \frac{1}{\#(\partial  U)}  \sum_{\substack{J,J'\in\partial  U}} |u_n(J)-u_n(J')|^q s_n^{d-q}\\
  &\leq \frac{1}{\#(\partial U)}  \sum_{\substack{J,J'\in\partial U}} (\#(\mathbf{t}_{\partial U}(J,J'))^{q-1}\sum_{I\in \mathbf{t}_{\partial U}(J,J')}\sum_{\substack{ I' \in \mathcal{I}_{s_n}(Q)\setminus \mathcal{J}_n :\\ I'\in N_{\e_n}(I)} } |u_n(I)-u_n(I')|^q s_n^{d-q} \\
    &\leq \frac{C}{\#(\partial U)}  \sum_{\substack{J,J'\in\partial U}} (\#(\mathbf{t}_{\partial U}(J,J'))^{q-1} \sum_{I\in \mathbf{t}_{\partial U}(J,J')}\left(\sum_{\substack{I'\in \mathcal{I}_{s_n}(Q)\setminus \mathcal{J}_n :\\ I'\in N_{\e_n}(I)} } |u_n(I)-u_n(I')|^p s_n^{\frac{dp}{q}-p}\right)^{q/p} \\
        &\leq \frac{C s_n^{\frac{d(p-q)}{p}}}{\#(\partial U)}  \sum_{\substack{J,J'\in\partial U}}  (\#(\mathbf{t}_{\partial U}(J,J'))^{q-1}\sum_{I \in \mathbf{t}_{\partial U}(J,J')}\left(\sum_{\substack{I'\in \mathcal{I}_{s_n}(Q)\setminus \mathcal{J}_n :\\ I'\in N_{\e_n}(I)} } |u_n(I)-u_n(I')|^p s_n^{d-p}\right)^{q/p}
\end{align*}
where we applied Jensen's inequality with the function $x\mapsto x^{p/q}$. Call
    \[
   Eu_n(I):= \sum_{\substack{I'\in \mathcal{I}_{s_n}(Q)\setminus \mathcal{J}_n :\\ I'\in N_{\e_n}(I)} } |u_n(I)-u_n(I')|^p s_n^{d-p}.
    \]
Then, an application of Holder's inequality yields
\begin{align*}
     \sum_{\substack{J\in\partial U}} & |u_n(J)-(u_n)_U|^q s_n^{d-q} \leq  \frac{C s_n^{\frac{d(p-q)}{p}}}{\#(\partial  U )}  \sum_{\substack{J,J'\in\partial U}}  (\#(\mathbf{t}_{\partial U}(J,J'))^{q-1}\sum_{I \in \mathbf{t}_{\partial U}(J,J')}\left(Eu_n(I)\right)^{q/p}\\
     &\leq  C \frac{s_n^{\frac{d(p-q)}{p}}}{\#(\partial  U)}  \sum_{\substack{J,J'\in\partial  U}}  (\#(\mathbf{t}_{\partial U}(J,J'))^{q-1+\frac{p-q}{p}}\left(\sum_{I\in \mathbf{t}_{\partial U}(J,J')} Eu_n(I)\right)^{q/p}\\
          &\leq C s_n^{\frac{d(p-q)}{p}}   \#(\partial  U)^{q+\frac{p-q}{p}}\left(\sum_{I\in \partial  U} Eu_n(I)\right)^{q/p} .
\end{align*}
In the last inequality we applied \eqref{eqn:lengthpath}. In particular, by applying again Holder inequality
\begin{align*}
 \sum_{\substack{U\subset \mathcal{J}_n \\ \text{connected}\\\text{component}}  } &\sum_{\substack{J\in\partial  U}} |u_n(J)-(u_n)_U|^q s_n^{d-q}\\
& \leq s_n^{\frac{d(p-q)}{p}}  \sum_{\substack{U\subset \mathcal{J}_n \\ \text{connected}\\\text{component}}  }  \#(\partial U)^{q+\frac{p-q}{p}}\left(\sum_{I\in \partial U} Eu_n(I)\right)^{q/p}\\
& \leq s_n^{\frac{d(p-q)}{p}} \left( \sum_{\substack{U\subset \mathcal{J}_n \\ \text{connected}\\\text{component}}  } \#(\partial U)^{\frac{pq}{p-q}+1}\right)^{\frac{p-q}{p}}\left(  \sum_{\substack{U\subset \mathcal{J}_n \\ \text{connected}\\\text{component}}  } \sum_{I\in \partial  U } Eu_n(I)\right)^{q/p}.
\end{align*}
By assumption \eqref{eqn:bndLimit} we have 
\[
 \left(   \sum_{\substack{U\subset \mathcal{J}_n \\ \text{connected}\\\text{component}}  } \sum_{I\in \partial  U } Eu_n(I)\right)^{q/p}<M.
\]
Moreover, by Properties b), c) of $\mathcal{J}_n$ (Definition \ref{def:contrsub}) we can infer (since $\#(\partial U)\leq C \#(U)$ for some universal constant $C$)
\begin{align*}
    s_n^{\frac{d(p-q)}{p}} \left( \sum_{\substack{U\subset \mathcal{J}_n \\ \text{connected}\\\text{component}}  }  \#(\partial U)^{\frac{pq}{p-q}+1}\right)^{\frac{p-q}{p}}&\leq C\left(  \log(s_n^{-1})^{\frac{pq}{p-q}} \sum_{\substack{U\subset \mathcal{J}_n \\ \text{connected}\\\text{component}}  } \#(\partial  U)s_n^{d}\right)^{\frac{p-q}{p}}\\
    &\leq C\left(  \log(s_n^{-1})^{\frac{pq}{p-q}}\#(\mathcal{J}_n) s_n^{d}\right)^{\frac{p-q}{p}}\\
     &\leq C\left(  \log(s_n^{-1})^{\frac{pq}{p-q}}s_n^{\varrho_0}\right)^{\frac{p-q}{p}}.
\end{align*}
This is enough to deduce that, for $q<p$,
\begin{equation} 
    \lim_{n\rightarrow +\infty}\sum_{\substack{U\subset \mathcal{J}_n \\ \text{connected}\\\text{component}}  } \sum_{\substack{J\in\partial  U}} |u_n(J)-(u_n)_U|^q s_n^{d-q}=0
\end{equation}
which, combined with \eqref{eqn:estimateEnr}, \eqref{eqn:bndCMP} is enough to conclude a uniform bound
    \begin{equation}\label{uniene}
         \sup_{n\in \N} \left\{\sum_{\substack{J,J'\in \mathcal{I}_{s_n}(Q) \\ |J-J'|=s_n}} |Tu_n(J)-Tu_n(J')|^q s_n^{d-q}\right\} <+\infty.
    \end{equation}
    The $L^q$ norm part can be estimated in a similar way as follows.
    \begin{align*}
        \sum_{J\in \mathcal{I}_{s_n}(Q)} |Tu_n(J)|^{q} s_n^d &\leq \sum_{J\in \mathcal{I}_{s_n}(Q)\setminus \mathcal{J}_n} |u_n(J)|^{q} s_n^d+\sum_{\substack{U\subset \mathcal{J}_n \\ \text{connected}\\\text{component}}  } \frac{\#(U)}{\#(\partial U)} \sum_{J\in \partial U}|u_n(J)|^q s_n^d.
    \end{align*}
 Observe that, recalling the notation $E(U):=\bigcup_{J\in U} Q_{s_{n}}(J)$, we have (for some constant $C$ depending on the dimension only) 
\begin{equation*}
\begin{split}
   C^{-1} \#(\partial U) s_n^{d-1}\leq P(&E(U))\leq C  \#(\partial U) s_n^{d-1}\\
    C^{-1} \#(\partial U) s_n^{d}\leq |&E(U)| \leq C  \#(\partial U) s_n^{d}
    \end{split}
\end{equation*}
(being $P(\cdot)$ the distributional perimeter). Thus, by the isoperimetric inequality (see for instance \cite{maggi})
 
    \[
    \frac{\#(U)}{\#(\partial U)}\leq C \frac{1}{s_n}\frac{|E(U)|}{P(E(U))}\leq C \frac{1}{s_n} P(E(U))^{1/(d-1)}\leq C \#(\partial U)^{1/(d-1)}
    \]
where $C$ is a universal constant depending only on the dimension. Thus
    \begin{align*}
\sum_{\substack{U\subset \mathcal{J}_n \\ \text{connected}\\\text{component}}  } \frac{\#(U)}{\#(\partial U)} \sum_{J\in \partial U}|u_n(J)|^q s_n^d &\leq C \sum_{\substack{U\subset \mathcal{J}_n \\ \text{connected}\\\text{component}}  } \#(\partial U)^{1/(d-1)} \sum_{J\in \partial U}|u_n(J)|^q s_n^d.
    \end{align*}
Also
    \begin{align*}
        \sum_{J\in \partial U}|u_n(J)|^q s_n ^d &\leq s_n^d \#(\partial U)^{(p-q)/p} \left(\sum_{J\in \partial U}|u_n(J)|^p \right)^{q/p} \\
        &\leq s_n^{d(p-q)/p} \#(\partial U)^{(p-q)/p} \left(\sum_{J\in \partial U}|u_n(J)|^p s_n^d \right)^{q/p} .
    \end{align*}
Call    
\[
\|u_n\|_{\ell^p(\partial U)}:=\left(\sum_{J\in \partial U}|u_n(J)|^p s_n^d\right)^{1/p}
\]
hence    
\begin{align*}
\sum_{\substack{U\subset \mathcal{J}_n \\ \text{connected}\\\text{component}}  } & \frac{\#(U)}{\#(\partial U)}   \sum_{J\in \partial U}|u_n(J)|^q s_n ^d \leq C s_n^{d(p- q)/p}\sum_{\substack{U\subset \mathcal{J}_n \\ \text{connected}\\\text{component}}  }  \#(\partial U)^{\frac{1}{d-1}+\frac{p-q}{p}}    \|u_n\|_{\ell^p(\partial  U)}^q\\
&\leq  \left( \sum_{\substack{U\subset \mathcal{J}_n \\ \text{connected}\\\text{component}}  }  \#(\partial U)^{\frac{p}{(p-q)(d-1)}+1} s_n^{d} \right)^{\frac{p-q}{p}}\left( \sum_{\substack{U\subset \mathcal{J}_n \\ \text{connected}\\\text{component}}  }  \|u_n\|_{\ell^p(\partial U)}^p\right)^{q/p}.
    \end{align*}
By assumption
\[
\left(\sum_{\substack{U\subset \mathcal{J}_n \\ \text{connected}\\\text{component}}  } \|u_n\|_{\ell^p(\partial U)}^p\right)^{q/p}<M.
\]
Moreover, by properties b), c) of $\mathcal{J}_n$ (Definition \ref{def:contrsub})  using again that $\#\partial U \leq  C \# U$ we have
\begin{align*}
    \left( \sum_{\substack{U\subset \mathcal{J}_n \\ \text{connected}\\\text{component}}  } \#(\partial  U)^{\frac{p}{(p-q)(d-1)}+1} s_n^{d} \right)^{\frac{p-q}{p}}\leq C \left(\log(s_n^{-1})^{\frac{p}{(p-q)(d-1)}} s_n^{\varrho_0}  \right)^{\frac{p-q}{p}}.
\end{align*}
We conclude that 
\begin{equation}\label{finiteNorm}
   \lim_{n\rightarrow +\infty} \sum_{J\in \mathcal{I}_{s_n}(Q)} |Tu_n(J)|^{q} s_n^d\leq C \sum_{J\in \mathcal{I}_{s_n}(Q)\setminus\mathcal{J}_n } |u_n(J)|^{p} s_n^d.
\end{equation}
In particular, by combining \eqref{finiteNorm}, \eqref{eqn:bndLimit} and \eqref{uniene} we conclude \eqref{eqn:bndLimit2}. 
\end{proof}
The last ingredient required to the proof of Theorem \ref{MainThm:CMP} is the following Lemma \ref{lem:alici}, which comes as a consequence of \cite[Theorem 3.1]{alicandro2004general}. \begin{lemma}\label{lem:alici}
Let $\{u_s: \mathcal{I}_s(Q)\rightarrow \R \}_{s\in \R_+}$ be a sequence of function such that
	\[
	\sup_{s\in \R_+}\left\{\sum_{\substack{ J,J' \in \mathcal{I}_s(Q)   :\\  |J-J'|=s  }} |u_s(J)-u_s(J')|^q s^{d-q}+\sum_{J\in \mathcal{I}_s(Q)} |u_s(J)|^q s^d \right\} <+\infty.
	\]
Then there exists a function $u\in W^{1,q}(Q)$ and a subsequence $\{s_l\}_{l\in \N}$ such that the piecewise constant extension  of $u_{s_l}$ :
\begin{align*}
    \hat{u}_{s_l}(x):=\sum_{J\in \mathcal{I}_{s_l}(Q)} u_{s_l}(J) \ca_{Q_{s_l}(J)}(x) 
\end{align*}
converges to $u$ in $L^q(Q)$.
\end{lemma}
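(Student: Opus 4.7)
The plan is to reduce Lemma \ref{lem:alici} to the classical Rellich--Kondrachov compactness theorem in $W^{1,q}(Q)$. Although the piecewise constant extension $\hat u_{s_l}$ lies only in $BV(Q)$, one can associate with $u_{s_l}$ a canonical piecewise affine interpolant $\tilde u_{s_l}$, defined through a Kuhn triangulation of each lattice cell, and then pass to the continuum limit through $\tilde u_{s_l}$.

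Concretely, I would tile $Q$ by the cells $\prod_{i=1}^d [J_i, J_i+s_l]$, $J\in s_l\Z^d\cap Q$, subdivide each such cell into $d!$ Kuhn simplices whose vertices are nearest-neighbor lattice points, and let $\tilde u_{s_l}$ be the continuous function interpolating $u_{s_l}$ affinely on each simplex. Since every edge of a Kuhn simplex joins two lattice points at mutual distance $s_l$, the gradient of $\tilde u_{s_l}$ on each simplex is controlled by the corresponding nearest-neighbor finite differences, and summing over cells yields
\begin{align*}
\|\nabla \tilde u_{s_l}\|^q_{L^q(Q)}&\leq C \sum_{\substack{J,J'\in \mathcal{I}_{s_l}(Q)\\ |J-J'|=s_l}} |u_{s_l}(J)-u_{s_l}(J')|^q\, s_l^{d-q}, \\
\|\tilde u_{s_l}\|^q_{L^q(Q)}&\leq C \sum_{J\in \mathcal{I}_{s_l}(Q)} |u_{s_l}(J)|^q\, s_l^d,
\end{align*}
with $C=C(d,q)$. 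The hypothesis makes both right-hand sides uniformly bounded, so $\{\tilde u_{s_l}\}$ is bounded in $W^{1,q}(Q)$ and Rellich--Kondrachov provides a (not relabeled) subsequence converging to some $u\in W^{1,q}(Q)$ in $L^q(Q)$.

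To recover the same convergence for $\hat u_{s_l}$, I would compare the two extensions directly: on each cell, both $\hat u_{s_l}$ and $\tilde u_{s_l}$ take values in the convex hull of the $2^d$ neighboring lattice values, so their pointwise difference is bounded by the maximal nearest-neighbor jump around $J$. Integrating gives
\[
\|\hat u_{s_l}-\tilde u_{s_l}\|^q_{L^q(Q)} \leq C\, s_l^q \sum_{\substack{J,J'\in \mathcal{I}_{s_l}(Q)\\ |J-J'|=s_l}} |u_{s_l}(J)-u_{s_l}(J')|^q\, s_l^{d-q} \leq C s_l^q \longrightarrow 0
\]
as $l\to \infty$, upgrading the $L^q$-convergence from $\tilde u_{s_l}$ to $\hat u_{s_l}$. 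The main technical obstacle is establishing the first inequality, which identifies the $W^{1,q}$ seminorm of the affine interpolant with the discrete $q$-Dirichlet energy, together with the bookkeeping for cells cut by $\partial Q$; both points are nevertheless standard and are precisely the content of \cite[Theorem 3.1]{alicandro2004general}, to which one may directly appeal once the functional framework has been identified.
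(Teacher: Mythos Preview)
Your proposal is correct and aligns with the paper, which does not give a proof of this lemma at all but simply records it as a consequence of \cite[Theorem~3.1]{alicandro2004general}; your sketch is precisely the standard argument underlying that reference, so in substance the two coincide.

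One small inaccuracy worth fixing: it is not true that ``every edge of a Kuhn simplex joins two lattice points at mutual distance $s_l$''---the Kuhn (Freudenthal) simplex associated with a permutation $\sigma$ has vertices $v_0=0,\ v_k=\sum_{j\le k} e_{\sigma(j)}$, and its edges include diagonals of length $\sqrt{2}\,s_l,\dots,\sqrt{d}\,s_l$. What is true, and what you actually need, is that \emph{consecutive} vertices satisfy $v_k-v_{k-1}=e_{\sigma(k)}$, so the components of $\nabla\tilde u_{s_l}$ on each simplex are exactly the nearest-neighbour differences $u_{s_l}(v_k)-u_{s_l}(v_{k-1})$; this is what yields your $W^{1,q}$ bound from the coordinate-direction differences $|J-J'|=s_l$ alone. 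Similarly, in the comparison step the pointwise gap $|\hat u_{s_l}-\tilde u_{s_l}|$ on a cell is bounded by the oscillation over its $2^d$ vertices, which is at most $d$ times the maximal nearest-neighbour jump (not a single jump), but this only changes the constant $C$. With these cosmetic corrections your argument goes through.
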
 
We are now ready to prove Theorem \ref{MainThm:CMP}.
\begin{proof}[Proof of Theorem \ref{MainThm:CMP}]
Define $s'_{\e}:=\frac{s_{\e}}{4\sqrt{d}}$. Notice that with this choice we have
    \begin{equation}\label{sqr}
    Q_{3 s_{\e}'}(J)\subseteq B_{s_\e}(x) \ \ \ \text{for all $x\in Q_{s_{\e'}}(J)$}.
    \end{equation}
Notice that $s_{\e}'$ still satisfies \eqref{eqn:regime2}.  By invoking Lemma \ref{lem:DimConn} and Proposition \ref{prop:decay} we can find $\kappa$ and extract a subsequence (still denoted by $\e_n, s'_n$) for which $\mathcal{J}_n:=\mathcal{I}^{\kappa}_{s'_n}$ is a controlled subfamily of indexes in the sense of Definition \ref{def:contrsub}. Without loss of generality we can also assume that
\[
\lim_{n\rightarrow +\infty} \frac{s_n'}{\e_n \log\left(\e_n^{-d}\right)^{\sfrac{1}{d}}}=\beta_0<+\infty.
\]
Define $u_n:\mathcal{I}_{s'_n}(Q)\setminus\mathcal{J}_n \rightarrow \R_+$
    \begin{equation*}
        u_n(J): =
            \frac{1}{\eta_{\e_n}(Q_{s_n'}(J))}\sum_{x\in \eta_{\e_n}\cap Q_{s'_n}(J)} u_{\e_n}(x)  \ \ \ \text{for all $J\in\mathcal{I}_{s'_n}(Q)\setminus \mathcal{J}_n$}.
    \end{equation*}

If $J,J'\in \mathcal{I}_{s'_n}(Q)\setminus\mathcal{J}_n$, $J'\in N_{\e_n}(J)$ then we have
\begin{align*}
|u_n(J)-u_n(J')|^p (s_n')^{d-p}\leq& C \frac{\e_n^{2d}}{s_n^{2d}}\sum_{x\in \eta_{\e_n}\cap  Q_{s'_n}(J)}\sum_{y\in \eta_{\e_n}\cap  Q_{3s'_n}(J)}|u_{\e_n}(x)-u_{\e_n}(y)|^p s_n^{d-p}\\
\leq& C   \e_n^d\sum_{x\in \eta_{\e_n}\cap  Q_{s'_n}(J)} |\mathrm{grad}_{s_n}u_{\e_n}(x)|^p
\end{align*}
 where the prefactor  $\frac{\e_n^{d}}{s_n^{d}}$  appears because of double-counting and where we exploited \eqref{sqr}. Henceforth 
\begin{equation}\label{energycontrol}
\sum_{\substack{J,J'\in \mathcal{I}_{s'_n}(Q)\setminus \mathcal{J}_n :\\ J'\in N_{\e_n}(J)}}|u_n(J)-u_n(J')|^p (s_n')^{d-p} <C\mathcal{F}_{\e_n}(u_{\e_n};A).
\end{equation}
In the same way
\begin{equation}\label{normcontrol}
\sum_{\substack{J\in \mathcal{I}_{s'_n}(Q)\setminus  \mathcal{J}_n } } |u_n(J)|^p (s_n')^{d}\leq C \sum_{x\in\eta_{\e_n}\cap Q}|u_{\e_n}(x)|^p \e_n^d.
\end{equation}
By invoking the extension Lemma \ref{lem:extension} we can find a $Tu_n:\mathcal{I}_{s'_n}(Q)\rightarrow \R_+$ which has (thanks to Lemma \ref{lem:alici} ) piecewise constant extension $\widehat{Tu_n}$ precompact in $L^q$ for each $q<p$. In particular, due to Sobolev embeddings we conclude that $\widehat{Tu_n}\rightarrow u$ in $L^p$ where $u\in W^{1,q}(Q)$ for all $q<p$. Notice that (recall the notation  \ref{eqn:index}, \ref{eqn:set}) by weak compactness we also infer that
    \[
    (\nabla \widehat{Tu_n})\ca_{Q\setminus E_{s_n'}^{\kappa}}\stackrel{L^p}{\rightharpoonup} V, \ \ \text{for some $V\in L^p$}.
    \]
Observe that 
\[
|E_{s'_n}^{\kappa}|=\#(\mathcal{I}_{s_n'}^{\kappa}(Q)) (s'_n)^d 
\]
If we are in situation a) of Proposition \ref{prop:decay} (namely $\beta_0>\gamma^{-\sfrac{1}{d}}$) then $|E^{\kappa}_{s_n'}|=0$ for $n$ big enough. If instead case b) is in force then
\[
|E_{s'_n}^{\kappa}|=\#(\mathcal{I}_{s'_n}^{\kappa}(Q)) (s'_n)^d \leq  \e_n^{\varrho_0} \rightarrow 0.
\]
In both cases $|E_{s'_n}^{\kappa}|\rightarrow 0 $. Thus $ \ca_{Q\setminus E_{s_n'}^{\kappa}}\rightarrow \ca_Q$ strongly in $L^q$ and $(\nabla \widehat{Tu_n})\stackrel{L^q}{\rightharpoonup} \nabla u$ then 
 
    \[
    (\nabla \widehat{Tu_n})\ca_{Q\setminus E_{s_n'}^{\kappa}}\stackrel{L^q}{\rightharpoonup} \nabla u
    \]
 yielding that $\nabla u=V\in L^p$. In particular $u\in W^{1,p}(\Omega)$ and $\widehat{Tu_n}\rightarrow u$ in $L^p(Q)$. Also
    \begin{align}
        \int_{\mathcal{V}(\eta^{\alpha}_{\e})\cap Q\setminus E_{s'_n}^{\kappa}}|\hat{u}_{\e_n}(x)-\widehat{Tu_n}(x)|^p&= \sum_{J\in \mathcal{I}_{s'_n}(Q)\setminus \mathcal{I}_{s'_n}^\kappa(Q)}  \int_{\mathcal{V}(\eta^{\alpha}_{\e})\cap Q_{s'_n}(J)} |\hat{u}_{\e_n}(x)-u_n(J)|^p \d x\nonumber\\
        &\leq C\e_n^d \sum_{J\in \mathcal{I}_{s'_n}(Q)\setminus \mathcal{I}_{s'_n}^\kappa(Q)} \sum_{x\in \eta^{\alpha}_{\e}\cap Q_{s'_n}(J)}  |u_{\e_n}(x)-u_n(J)|^p \nonumber\\
               &\leq C\frac{\e_n^{2d}}{s_n^d} \sum_{J\in \mathcal{I}_{s'_n}(Q)\setminus \mathcal{I}_{s'_n}^\kappa(Q)} \sum_{x,y\in \eta^{\alpha}_{\e}\cap Q_{s'_n}(J)}  |u_{\e_n}(x)-u_{\e_n}(y)|^p \nonumber\\
              &\leq Cs_n^p \F_{\e_n}(u_{\e_n};Q)\rightarrow 0.\label{eqn;uno}
    \end{align}
Hence
\begin{align}
   \int_{\mathcal{V}_{\e_n}(\eta^{\alpha}_{\e_n})\cap   E_{s'_n}^{\kappa}}&|\hat{u}_{\e_n}(x)-\widehat{Tu_n}(x)|^q\d x \nonumber\\
   &\leq    \left(\int_{\mathcal{V}_{\e_n}(\eta^{\alpha}_{\e_n})\cap   E_{s'_n}^{\kappa}}|\hat{u}_{\e_n}(x)-\widehat{Tu_n}(x)|^p\d x\right)^{\sfrac{q}{p}}|E_{s'_n}^{\kappa}|^{\frac{(p-q)}{p}} \rightarrow 0 \label{eqn;deu}
\end{align}
for all $q<p$. \rc{Observe that, due to the property \eqref{eqn:bndLimit2} of the extended function $Tu_n$, to the bound \eqref{eqn:bndCMP} and to the very definition of $\eta_{\a}$ we can infer 
\begin{equation}\label{eqn:refcor}
 \left(\int_{\mathcal{V}_{\e_n}(\eta^{\alpha}_{\e_n})\cap   E_{s'_n}^{\kappa}}|\hat{u}_{\e_n}(x)-\widehat{Tu_n}(x)|^p\d x\right)^{\sfrac{q}{p}}<C
\end{equation}
 }
By collecting \eqref{eqn;uno}, \eqref{eqn;deu} and \eqref{eqn:refcor} we conclude that $u_{\e_n}\stackrel{q}{\longrightarrow} u\in W^{1,p}(Q)$ in the sense of Definition \ref{def:conv} for all $q<p$.
\end{proof}

\bibliography{references}

\begin{thebibliography}{10}

\bibitem{aizenman1987sharpness}
Michael Aizenman and David~J Barsky.
\newblock Sharpness of the phase transition in percolation models.
\newblock {\em Communications in Mathematical Physics}, 108(3):489--526, 1987.

\bibitem{aizenman1984tree}
Michael Aizenman and Charles~M Newman.
\newblock Tree graph inequalities and critical behavior in percolation models.
\newblock {\em Journal of Statistical Physics}, 36(1):107--143, 1984.

\bibitem{alicandro2014variational}
Roberto Alicandro and Nadia Ansini.
\newblock A variational model of interaction between continuum and discrete
  systems.
\newblock {\em Mathematical Models and Methods in Applied Sciences},
  24(10):1957--2008, 2014.

\bibitem{alicandro2020variational}
Roberto Alicandro, Nadia Ansini, Andrea Braides, Andrey Piatnitski, and Antonio
  Tribuzio.
\newblock A variational theory of convolution-type functionals.
\newblock {\em arXiv preprint arXiv:2007.03993}, 2020.

\bibitem{alicandro2004general}
Roberto Alicandro and Marco Cicalese.
\newblock A general integral representation result for continuum limits of
  discrete energies with superlinear growth.
\newblock {\em SIAM journal on mathematical analysis}, 36(1):1--37, 2004.

\bibitem{alicandro2011integral}
Roberto Alicandro, Marco Cicalese, and Antoine Gloria.
\newblock Integral representation results for energies defined on stochastic
  lattices and application to nonlinear elasticity.
\newblock {\em Archive for rational mechanics and analysis}, 200(3):881--943,
  2011.

\bibitem{alicandro2000finite}
Roberto Alicandro, Matteo Focardi, and Maria~Stella Gelli.
\newblock Finite-difference approximation of energies in fracture mechanics.
\newblock {\em Annali della Scuola Normale Superiore di Pisa-Classe di
  Scienze}, 29(3):671--709, 2000.

\bibitem{ansini2002asymptotic}
Nadia Ansini and Andrea Braides.
\newblock Asymptotic analysis of periodically-perforated nonlinear media.
\newblock {\em Journal de math{\'e}matiques pures et appliqu{\'e}es},
  81(5):439--451, 2002.

\bibitem{bollobas2006percolation}
Bela Bollob{\'a}s, Oliver Riordan, and O~Riordan.
\newblock {\em Percolation}.
\newblock Cambridge University Press, 2006.

\bibitem{boucheron2013concentration}
St{\'e}phane Boucheron, G{\'a}bor Lugosi, and Pascal Massart.
\newblock {\em Concentration inequalities: A nonasymptotic theory of
  independence}.
\newblock Oxford university press, 2013.

\bibitem{braides2014discrete}
Andrea Braides.
\newblock Discrete-to-continuum variational methods for lattice systems.
\newblock In {\em Proceedings of the International Congress of Mathematicians
  August}, pages 13--21, 2014.

\bibitem{carbra2021}
Andrea Braides and Marco Caroccia.
\newblock Asymptotic behavior of the dirichlet energy on poisson point clouds.
\newblock {\em arXiv preprint arXiv:2203.16877}, 2022.

\bibitem{braides2002limits}
Andrea Braides and Maria~Stella Gelli.
\newblock Limits of discrete systems with long-range interactions.
\newblock {\em Journal of Convex Analysis}, 9(2):363--400, 2002.

\bibitem{braides2018design}
Andrea Braides and Leonard Kreutz.
\newblock Design of lattice surface energies.
\newblock {\em Calculus of Variations and Partial Differential Equations},
  57(4):1--43, 2018.

\bibitem{braides2006discrete}
Andrea Braides and Stella~Gelli Maria.
\newblock From discrete systems to continuous variational problems: an
  introduction.
\newblock In {\em Topics on concentration phenomena and problems with multiple
  scales}, pages 3--77. Springer, 2006.

\bibitem{braides2020homogenization}
Andrea Braides and Andrey Piatnitski.
\newblock Homogenization of quadratic convolution energies in periodically
  perforated domains.
\newblock {\em Advances in Calculus of Variations}, 2020.

\bibitem{BraPia20}
Andrea Braides and Andrey Piatnitski.
\newblock Homogenization of ferromagnetic energies on poisson random sets in
  the plane.
\newblock {\em Archive for Rational Mechanics and Analysis}, pages 1--26, 2022.

\bibitem{Braides2021}
Andrea Braides and Margherita Solci.
\newblock {\em Discrete-to-Continuum Limits of Planar Lattice Energies}, pages
  31--51.
\newblock Springer International Publishing, Cham, 2021.

\bibitem{calder2020rates}
Jeff Calder, Dejan Slep{\v{c}}ev, and Matthew Thorpe.
\newblock Rates of convergence for laplacian semi-supervised learning with low
  labeling rates.
\newblock {\em arXiv preprint arXiv:2006.02765}, 2020.

\bibitem{caroccia2020mumford}
Marco Caroccia, Antonin Chambolle, and Dejan Slep{\v{c}}ev.
\newblock Mumford-shah functionals on graphs and their asymptotics.
\newblock {\em Nonlinearity}, 33(8):3846, 2020.

\bibitem{chambolle1999finite}
Antonin Chambolle.
\newblock Finite-differences discretizations of the mumford-shah functional.
\newblock {\em ESAIM: Mathematical Modelling and Numerical Analysis},
  33(2):261--288, 1999.

\bibitem{cicalese2020n}
Marco Cicalese, Gianluca Orlando, and Matthias Ruf.
\newblock The $ n $-clock model: Variational analysis for fast and slow
  divergence rates of $ n$.
\newblock {\em arXiv preprint arXiv:2012.09548}, 2020.

\bibitem{cristoferi18}
R.~Cristoferi and M.~Thorpe.
\newblock Large data limit for a phase transition model with the p-laplacian on
  point clouds.
\newblock {\em to appear on European Journal of Applied Mathematics, ArXiv
  preprint ArXiv:1802.08703}, 2018.

\bibitem{Friedrich_2020}
Manuel Friedrich and Leonard Kreutz.
\newblock Finite crystallization and wulff shape emergence for ionic compounds
  in the square lattice.
\newblock {\em Nonlinearity}, 33(3):1240--1296, feb 2020.

\bibitem{garciatrillos15}
N.~Garc\'ia~Trillos and D.~Slep\v{c}ev.
\newblock On the rate of convergence of empirical measures in
  $\infty$-transportation distance.
\newblock {\em Canadian Journal of Mathematics}, 67:1358--1383, 2015.

\bibitem{garciatrillos15aAAA}
N.~Garc\'ia~Trillos and D.~Slep\v{c}ev.
\newblock A variational approach to the consistency of spectral clustering.
\newblock {\em Applied and Computational Harmonic Analysis}, 2016.

\bibitem{grimmett2013percolation}
G.R. Grimmett.
\newblock {\em Percolation}.
\newblock Grundlehren der mathematischen Wissenschaften. Springer Berlin
  Heidelberg, 2013.

\bibitem{last2017lectures}
G{\"u}nter Last and Mathew Penrose.
\newblock {\em Lectures on the Poisson process}, volume~7.
\newblock Cambridge University Press, 2017.

\bibitem{loeve1977elementary}
Michel Loeve.
\newblock Elementary probability theory.
\newblock In {\em Probability theory i}, pages 1--52. Springer, 1977.

\bibitem{maggi}
Francesco Maggi.
\newblock {\em Sets of finite perimeter and geometric variational problems},
  volume 135 of {\em Cambridge Studies in Advanced Mathematics}.
\newblock Cambridge University Press, Cambridge, 2012.
\newblock An introduction to geometric measure theory.

\bibitem{menshikov1986coincidence}
Mikhail~V Menshikov.
\newblock Coincidence of critical points in percolation problems.
\newblock In {\em Soviet Mathematics Doklady}, volume~33, pages 856--859, 1986.

\bibitem{Ruf17a}
Matthias Ruf.
\newblock Discrete stochastic approximations of the mumford--shah functional.
\newblock In {\em Annales de l'Institut Henri Poincar{\'e} C, Analyse non
  lin{\'e}aire}, volume~36, pages 887--937. Elsevier, 2019.

\bibitem{slepcev2019analysis}
Dejan Slepcev and Matthew Thorpe.
\newblock Analysis of p-laplacian regularization in semisupervised learning.
\newblock {\em SIAM Journal on Mathematical Analysis}, 51(3):2085--2120, 2019.

\bibitem{thorpe2019asymptotic}
Matthew Thorpe and Florian Theil.
\newblock Asymptotic analysis of the ginzburg--landau functional on point
  clouds.
\newblock {\em Proceedings of the Royal Society of Edinburgh Section A:
  Mathematics}, 149(2):387--427, 2019.

\bibitem{trillos2016continuum}
Nicol{\'a}s~Garc\'{i}a Trillos and Dejan Slep{\v{c}}ev.
\newblock Continuum limit of total variation on point clouds.
\newblock {\em Arch. Ration. Mech. Anal.}, 220(1):193--241, 2016.

\end{thebibliography}
\bibliographystyle{plain}

\end{document}